\theoremstyle{plain}
\newtheorem*{theorem}{Theorem}
\newtheorem{thm}{Theorem}[section]
\newtheorem{alg}[thm]{Algorithm}
\newtheorem{lem}[thm]{Lemma}
\newtheorem{rem}[thm]{Remark}
\newtheorem{quest}[thm]{Question}
\DeclareMathOperator{\PGL}{PGL}
\DeclareMathOperator{\N}{N}
\DeclareMathOperator{\Cl}{Cl}
\DeclareMathOperator{\Hom}{Hom}
\DeclareMathOperator{\GL}{GL}
\DeclareMathOperator{\SL}{SL}
\DeclareMathOperator{\Gal}{Gal}
\DeclareMathOperator{\Jac}{Jac}
\DeclareMathOperator{\End}{End}
\DeclareMathOperator{\GSp}{GSp}
\DeclareMathOperator{\Sp}{Sp}
\DeclareMathOperator{\Tr}{Tr}
\newcommand{\Q}{{\mathbf Q}}
\newcommand{\Z}{{\mathbf Z}}
\newcommand{\Zbar}{\overline{\Z}}
\newcommand{\C}{{\mathbf C}}
\newcommand{\R}{{\mathbf R}}
\newcommand{\F}{{\mathbf F}}
\newcommand{\CO}{\mathcal{O}}
\newcommand{\ga}{{\mathfrak{a}}}
\newcommand{\gb}{{\mathfrak{b}}}
\newcommand{\gc}{{\mathfrak{c}}}
\newcommand{\gm}{\mathfrak{m}}
\newcommand{\gP}{\mathfrak{P}}
\newcommand{\gp}{\mathfrak{p}}
\newcommand{\gN}{\mathfrak{N}}
\newcommand{\Qbar}{\overline{\Q}}
\newcommand{\Frob}{\mathrm{Frob}}
\title[Abelian surfaces with everywhere good reduction]{On the existence of abelian surfaces with everywhere good reduction}
\author{Lassina Demb\'el\'e}
\address{Department of Mathematics, Dartmouth College,
Hanover, NH 03755, USA}
\email{lassina.dembele@gmail.com}
\thanks{The author is supported by EPSRC Grants EP/J002658/1 and EP/L025302/1, and a Simons Collaboration Grant (550029).}
\begin{document}

\date{\today}

\begin{abstract}
Let $D \le 2000$ be a positive discriminant such that $F = \Q(\sqrt{D})$ has narrow class one, and $A/F$ an abelian
surface of $\GL_2$-type with everywhere good reduction. Assuming that $A$ is modular, we show that $A$ is either an 
$F$-surface or is a base change from $\Q$ of an abelian surface $B$ such that $\End_\Q(B) = \Z$, except for 
$D = 353, 421, 1321, 1597$ and $1997$. In the latter case, we show that
there are indeed abelian surfaces with everywhere good reduction over $F$ for $D = 353, 421$ and $1597$,
which are non-isogenous to their Galois conjugates. These are the first known such examples. 
\end{abstract}

\maketitle

\section{\bf Introduction}
The following is a well-known result due to Faltings~\cite[Satz 5]{fal83} (see also~\cite{fal84}):

\begin{thm}\label{thm:faltings} Let $F$ be a number field, $S$ a finite set of places of $F$, and $g \ge 1$ an integer. 
Then, the set of isomorphism classes of abelian varieties of dimension $g$ defined over $F$, with good reduction outside $S$, is finite.
\end{thm}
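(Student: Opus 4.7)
The plan is to follow Faltings' original strategy, splitting the statement into two finiteness assertions: finiteness of the set of $F$-isogeny classes of abelian varieties of dimension $g$ over $F$ with good reduction outside $S$, and finiteness of isomorphism classes within each such isogeny class.

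The first step is to translate the problem into $\ell$-adic Galois representations. For a fixed prime $\ell$, each such $A$ gives a continuous representation $\rho_{A,\ell} \colon G_F \to \GL(V_\ell A)$ of dimension $2g$, and by the N\'eron--Ogg--Shafarevich criterion, good reduction of $A$ outside $S$ is equivalent to $\rho_{A,\ell}$ being unramified outside $T := S \cup \{v \mid \ell\}$. Invoking Faltings' isogeny theorem (the Tate conjecture for abelian varieties over number fields), the $F$-isogeny class of $A$ is determined by the isomorphism class of the semisimplification of $V_\ell A$ as a $G_F$-representation. Finiteness of isogeny classes thus reduces to the finiteness of the set of such semisimple $\ell$-adic Galois representations that actually arise from abelian varieties.

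To produce this finiteness I would introduce the stable Faltings height $h(A)$. Two ingredients are essential. First, via Zarhin's trick, $(A \times A^\vee)^4$ admits a principal polarization, so one may work inside the moduli space $\mathcal{A}_{4g}$ of principally polarized abelian varieties; on this space, the Faltings height enjoys a Northcott property, giving only finitely many $F$-points with $h \le C$ for any $C$. Second, Faltings' theorem that $h$ varies in a controlled way within an isogeny class, combined with a uniform bound on $h(A)$ across all $A$ with good reduction outside $S$ (obtained from the explicit description of semistable N\'eron models at places in $S$ and Hermite--Minkowski to bound residue field extensions), yields finiteness of isogeny classes. Finiteness inside each isogeny class follows once more from Faltings' isogeny theorem: any $A'/F$ isogenous to $A$ corresponds to a $G_F$-stable lattice in $V_\ell A$, and only finitely many such lattices produce non-isomorphic abelian varieties once the height is bounded.

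I expect the principal obstacle to be the behaviour of the Faltings height under isogeny together with its uniform boundedness across the set of all abelian varieties with good reduction outside $S$. This is the arithmetic core of Faltings' original argument, and it rests on delicate Arakelov geometry on $\mathcal{A}_g$, $p$-adic Hodge-theoretic control at places above $\ell$, and semistable reduction at the places of bad reduction.
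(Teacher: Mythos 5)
The paper does not prove this theorem; it is stated as Faltings' finiteness theorem (the resolution of Shafarevich's conjecture for abelian varieties) and is cited to \cite[Satz~5]{fal83} with the erratum \cite{fal84}. So there is no proof in the paper for your sketch to be compared against; what you have written is an outline of Faltings' own argument, and it is against that argument that I will assess it.

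Your sketch has the right cast of characters (Faltings height, Northcott, Zarhin's trick, the isogeny theorem, Hermite--Minkowski), and the top-level decomposition into ``finitely many isogeny classes'' plus ``finitely many isomorphism classes per isogeny class'' is correct. But the logic of the first half is garbled in a way that would not survive being written out. You assert a uniform bound on $h(A)$ across \emph{all} $A/F$ with good reduction outside $S$, ``obtained from the explicit description of semistable N\'eron models at places in $S$ and Hermite--Minkowski,'' and use it to deduce finiteness of isogeny classes. No such a priori height bound is available: the Faltings height has an archimedean (period) contribution which neither N\'eron models at the finite places nor Hermite--Minkowski control, and in fact such a uniform bound is a \emph{consequence} of the finiteness theorem, not an input to it --- so the argument as stated is circular, and moreover if you genuinely had this bound you would get finiteness of isomorphism classes directly from Northcott and the isogeny-class machinery would be unnecessary. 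Faltings' actual route to finiteness of isogeny classes is representation-theoretic and does not pass through the height at all: by the Weil bounds the traces $a_v(A)$ of Frobenius satisfy $|a_v| \le 2g\sqrt{\mathrm{N}v}$, so take finitely many values at each unramified $v$; a Hermite--Minkowski/Chebotarev lemma (Faltings' ``finiteness lemma'') produces a finite set $T'$ of places such that a semisimple $2g$-dimensional $\ell$-adic representation unramified outside $T$ is determined by its traces at $\mathrm{Frob}_v$, $v \in T'$; hence finitely many semisimplifications, and by the Tate conjecture (Faltings' isogeny theorem) finitely many isogeny classes. The height enters only afterwards: the key technical result is that the Faltings height is \emph{bounded within an isogeny class} (this is where $p$-adic Hodge theory and the study of the height under $\ell$-isogeny live), and combined with Northcott on $\mathcal{A}_g$ --- after Zarhin's trick to reduce to principally polarized varieties --- this gives finiteness inside each isogeny class. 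If you reorganize your sketch along these lines, replacing the spurious uniform height bound by the Weil-bound-plus-Chebotarev argument, it becomes a faithful outline of Faltings' proof.
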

Theorem~\ref{thm:faltings} can be seen as an analogue of the Hermite-Minkowski theorem. 
The case when $S = \emptyset$ seems of particular interest since it relates to unramified motives. 
Fontaine~\cite{fon85} showed that there are {\it no} nonzero abelian varieties over $\Q$ with everywhere good 
reduction, thus proving Theorem~\ref{thm:faltings} for $F = \Q$, $S = \emptyset$ and all $g\ge 1$. 
Fontaine's result is very striking for two reasons at least. Indeed, not only is this one of the handful cases where one can explicitly determine the 
set of isomorphism classes of abelian varieties predicted by Theorem~\ref{thm:faltings}. But also, it shows that, for $F= \Q$ 
and $S = \emptyset$, this set is empty in every dimension $g\ge 1$. However, this non-existence result seems to be the 
exception rather than the norm. Indeed, Schoof~\cite{sch03} proved that, for $f \ge 1$ an integer not in $\{1, 3, 4, 5, 7, 8, 9, 11, 12, 15\}$, there exist 
non-zero abelian varieties with everywhere good reduction over the cyclotomic field $\Q(\zeta_f)$. Similarly, Moret-Bailly~\cite[Corollaire 5.9]{mob01} 
asserts that the stack $\mathscr{A}_g$ parametrising all principally 
polarised abelian schemes of dimension $g\ge 2$ over $\Z$ has a point over $\Zbar$. This means that, for every integer $g\ge 2$, there 
exist a number field $F$, and an abelian variety $A$ of dimension $g$ over $F$ with every good reduction. Therefore, in order to elucidate the
set of isomorphism classes predicted in Theorem~\ref{thm:faltings} for $S = \emptyset$, we might start with the following question:

\begin{quest}\rm \label{quest:ab-egr1} Given a number field $F$ and an integer $g\ge 1$, does there exist an abelian variety $A$ of
dimension $g$ defined over $F$, with everywhere good reduction?
\end{quest}

It is extremely difficult to give a purely arithmetic-geometric answer to Question~\ref{quest:ab-egr1} in general, even for $g = 1$, where there is 
a great deal of work over quadratic fields (see~\cite{cre92, elk14, kag97, kag01, kk97, pin82, set81, str83} for example). When $F$ is a 
real quadratic field, work of Freitas-Le Hung-Siksek~\cite{fls15} shows that all  elliptic curves defined over $F$ are modular. So, in this case, 
the set of isogeny  classes of elliptic curves with trivial conductor over $F$, corresponds to a subset of the set of Hilbert newforms of weight 
$2$, level $(1)$ and trivial central character on $F$, with integer Hecke eigenvalues. Similarly, the Eichler-Shimura conjecture predicts that the
latter set injects into the former, meaning that there is in fact a conjectural bijection between the two sets. So, for $F$ real quadratic field and $g=1$, 
one can provide an effective answer to Question~\ref{quest:ab-egr1} by first determining the set of Hilbert newforms of weight $2$, level $(1)$ 
and trivial central character on $F$, with integer Hecke eigenvalues. 

The Modularity conjecture for $\GL_2$-type abelian varieties and the Eichler-Shimura conjecture make similar predictions in every dimension 
$g\ge 1$. By making use of this, Kumar and the author found many examples of abelian surfaces with everywhere good reduction 
over real quadratic fields of narrow class number one and discriminant $\le 1000$ in~\cite{dk16}, thus providing an answer to 
Question~\ref{quest:ab-egr1} for most of those fields for $g = 2$. 
%They found the first such examples that weren't products of elliptic curves or of CM type. However, most of their examples arose 
%from base change in some form. 
In this paper, we extend those results. More specifically, we proved the following result (Theorem~\ref{thm:egr}).

\begin{theorem} Let $F$ be a real quadratic field of narrow class number one and discriminant $D \le 2000$.
Let $A$ be a modular abelian surface of $\GL_2$-type defined over $F$, with everywhere good reduction.
Then, except for $D = 353, 421, 1321, 1597$ or $1997$, we have one of the following:
\begin{enumerate}[(i)]
\item A is an $F$-surface, i.e. there is an abelian fourfold $B$ of $\GL_2$-type defined over $\Q$ such that $B \times_\Q F$ is isogenous to $A \times {}^\sigma\!A$,
where $\Gal(F/\Q) = \langle \sigma \rangle$; or
\item There is an abelian surface $B$ defined over $\Q$ such that $\End_\Q(B) = \Z$ and $B \times_\Q F$ is isogenous to $A$.
\end{enumerate}
\end{theorem}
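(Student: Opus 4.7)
The plan is to translate the problem, via modularity, into a question about Hilbert cuspidal eigenforms and then to analyze descent to $\Q$. By the modularity hypothesis together with $\dim A = 2$ and the $\GL_2$-type property, the compatible system of Galois representations attached to $A$ matches that of a Hilbert newform $f$ over $F$ of parallel weight $2$, level $(1)$, and trivial central character, with Hecke eigenvalue field $K_f$ real quadratic (since $\dim A = [K_f : \Q]$). For each $F = \Q(\sqrt{D})$ of narrow class number one with $D \le 2000$, the finite set of such $f$ is enumerated by a Brandt-matrix computation: since $F$ has narrow class one and two infinite places, there is a totally definite quaternion algebra over $F$ ramified exactly at both infinite places, and the Jacquet-Langlands correspondence identifies the new subspace of $S_2(\CO_F)$ with a space of quaternionic algebraic modular forms.

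Given the candidate list, for each $f$ compare to its Galois conjugate $f^\sigma$, defined by $a_\gp(f^\sigma) = a_{\sigma\gp}(f)$, where $\sigma$ generates $\Gal(F/\Q)$. If $f^\sigma \ne f$, then $A$ and ${}^\sigma\!A$ are non-isogenous, and the Weil restriction $B := \Res_{F/\Q}(A)$ is an abelian fourfold over $\Q$ with $B_F \sim A \times {}^\sigma\!A$. Case (i) holds precisely when $B$ is of $\GL_2$-type, equivalently when there exists a classical weight-$2$ newform $g$ on some $\Gamma_0(N)$ with coefficient field of degree $4$ whose $\ell$-adic Galois representation matches that of $B$; the Hecke eigenvalues of $g$ are pinned down by $a_p(g) = a_\gp(f) + a_{\sigma\gp}(f)$ for split primes $p = \gp\cdot\sigma\gp$, and by analogous formulas at inert and ramified primes. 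If $f^\sigma = f$, we instead seek a surface $B/\Q$ with $\End_\Q(B) = \Z$ such that $B_F \sim A$ and $B$ acquires real multiplication by $\CO_{K_f}$ over $F$; by the paramodular conjecture of Brumer--Kramer, such $B$ corresponds to a cuspidal paramodular eigenform on $\GSp_4/\Q$ whose compatible system of Galois representations is again dictated by the Hecke data of $f$.

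In both sub-cases, we locate $g$ (respectively the paramodular form, and hence $B$) by a search over admissible levels in the LMFDB (respectively in the Poor--Yuen paramodular tables). The match is certified by comparing Hecke eigenvalues up to the Sturm bound for the candidate level, after which strong multiplicity one excludes coincidence. The main difficulty lies in the five exceptional discriminants $D \in \{353, 421, 1321, 1597, 1997\}$: for these, the searches on both sides fail, and certifying this failure requires exhausting all admissible levels up to the appropriate Sturm bound and producing a rigorous non-descent certificate for the attached Galois representation (e.g., by examining the image of a residual mod-$\ell$ representation and showing it is genuinely $F$-rational rather than descended). The remainder of the paper then constructs explicit abelian surfaces over $F$ for $D = 353, 421, 1597$, confirming that these three discriminants yield genuinely new examples of $\GL_2$-type abelian surfaces with everywhere good reduction that are neither $F$-surfaces nor base changes from $\Q$.
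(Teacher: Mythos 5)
Your proposal correctly identifies that the problem reduces, via modularity, to enumerating Hilbert newforms of parallel weight $2$, level $(1)$, with real quadratic Hecke field, and that one then classifies by comparing $f$ with its Galois conjugate ${}^\sigma\!f$. However, the dichotomy you set up and the conclusions you attach to each branch are reversed, and one of your intermediate claims is false.

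First, the claim \emph{``If $f^\sigma \ne f$, then $A$ and ${}^\sigma\!A$ are non-isogenous''} is wrong. Since $K_f$ is a quadratic field with $\Gal(K_f/\Q) = \langle\tau\rangle$, the condition ${}^\sigma\!f \ne f$ leaves open the possibility that ${}^\sigma\!f = f^\tau$, in which case
$$L(A,s) = L(f,s)L(f^\tau,s) = L({}^\sigma\!f,s)L(f,s) = L({}^\sigma\!A,s),$$
so by Faltings' isogeny theorem $A \sim {}^\sigma\!A$. The correct trichotomy is: (a) ${}^\sigma\!f = f$; (b) ${}^\sigma\!f = f^\tau \ne f$; (c) ${}^\sigma\!f \notin \{f, f^\tau\}$. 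Only case (c) makes $A$ and ${}^\sigma\!A$ non-isogenous, and those are exactly the five exceptional discriminants $D = 353, 421, 1321, 1597, 1997$ (flagged in the paper's Table~1 by the superscript ${}^{(2)}$).

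Second, you have interchanged the conclusions. When ${}^\sigma\!f = f$, the form $f$ is a genuine base change of a classical newform $g \in S_2(D, (\tfrac{D}{\cdot}))$, and the associated Eichler--Shimura abelian variety $B_g/\Q$ is a \emph{fourfold} of $\GL_2$-type with $B_g\times_\Q F \sim A \times {}^\sigma\!A$; this is conclusion (i), not (ii). Your proposed alternative---a surface $B/\Q$ with $\End_\Q(B)=\Z$ and $B_F \sim A$---cannot exist here, because $\Gal(F/\Q)$ would act trivially on $\End_F(B_F) \simeq K_f$ (the Hecke eigenvalues are $\sigma$-invariant), forcing $\End_\Q(B)\otimes\Q = K_f \ne \Q$. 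Conversely, when ${}^\sigma\!f = f^\tau \ne f$, the descent theorem of Cunningham--Demb\'el\'e (cited as \cite{cd17} in the paper) shows that the isogeny class of $A$ descends to a surface $B/\Q$, and the nontrivial action of $\sigma$ on $\End_F(B_F)$ via $\tau$ forces $\End_\Q(B) = \Z$; this is conclusion (ii). So the correspondence is exactly opposite to what you wrote.

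Third, the mechanism you propose for \emph{certifying} which case holds---searching LMFDB and paramodular tables up to a Sturm bound and producing a non-descent certificate from residual images---is neither what the paper does nor necessary. The paper's argument is purely structural: once one knows which of (a), (b), (c) holds (read off directly from the finitely many Hecke eigenvalues needed to distinguish the constituents), one invokes Shimura's theory in case (a) and the descent theorem in case (b). No Sturm-bound matching against an external database is needed, and the exceptional discriminants are identified by a direct eigenvalue comparison, not by a failed search. Your formula $a_p(g) = a_\gp(f) + a_{\sigma\gp}(f)$ is also not what pins down $g$ in the base change case; there one has $a_\gp(f) = a_p(g)$ at split primes, and the relevant fourfold has a quartic coefficient field $K_g$ that is a quadratic extension of $K_f$ accounting for the extension cocycle.
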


For the exceptional discriminants $D = 353, 421$ and $1597$, we showed that there are indeed abelian surfaces defined over $F$ with everywhere 
good reduction, which are  non-isogenous to their Galois conjugates; they are the first known such examples, and are dimension $2$ analogue 
of the elliptic curves of trivial conductor over $\Q(\sqrt{509})$ found by Pinch~\cite{pin82}. In~\cite{dk16}, the abelian surfaces were obtained by 
searching for rational points on Hilbert modular surfaces using explicit models in~\cite{ek14}. In this paper, we employed a height search in $2$-torsion 
fields, which in turn we used to refine the search methods in~\cite{dk16}.

Due to the nature of our approach, all our abelian surfaces are of $\GL_2$-type. It would be interesting to find a real quadratic field $F$, 
and an abelian surface $A$ defined over $F$ such that $A$ has trivial conductor with $\End_F(A) = \Z$. Such an abelian surface would be conjecturally 
attached to a Hilbert-Siegel eigenform of genus $2$, weight $2$ and level $(1)$, with integer Hecke eigenvalues. There are no methods for computing 
such forms yet, an added difficulty being that the weight $2$ is non-cohomological. However, recently, Chenevier~\cite{che18} proved a Hermite-Minkowski 
type theorem for automorphic forms over $\GL_n$. We believe that one could adapt his approach to $\GSp_4$ (and appropriate quadratic fields) to find 
the unramified Hilbert Siegel newforms needed to locate those abelian surfaces $A$ of trivial conductor, with $\End_F(A) = \Z$. 

The outline of the paper is as follows. In Section~\ref{sec:heights}, we start by revisiting the Doyle-Krumm algorithm for computing algebraic numbers 
of bounded height; we give a vastly improved version of the algorithm which could be of independent interest on its own. In Section~\ref{sec:2-tors-flds}, 
we review some background material on $2$-torsion of abelian surfaces, and in Section~\ref{sec:fontaine-bds}, we recall the Fontaine bounds for the 
root discriminants for the splitting fields of finite flat $p$-group schemes. In Section~\ref{sec:as-egr}, we describe all modular abelian surfaces of 
$\GL_2$-type over real quadratic fields with discriminant at most $2000$ and narrow class number one. Finally, in Sections~\ref{sec:sqrt353}, \ref{sec:sqrt421}, 
\ref{sec:sqrt1597} and \ref{sec:sqrt1997}, we discussed the missing surfaces.

{\bf Acknowledgements.} I would like to thank Armand Brumer, Fred Diamond, Steve Donnelly, Abhinav Kumar, Ren\'e Schoof and John Voight for helpful email 
exchanges and discussions. I would also like to thank Eric Driver, John Jones, J\"urgen Kluners and David P. Roberts for helping find some of the polynomials
displayed in this work. During the course of this project, I stayed at the following institutions: Dartmouth College, King's College London, and the Max-Planck Institute for 
Mathematics in Bonn; I would like to thank them for their generous hospitality.

\section{\bf Algebraic numbers of bounded height}\label{sec:heights}

In this section, we revisit the algorithm for computing algebraic numbers of bounded height described 
in the beautiful paper~\cite{dokr15} of Doyle-Krumm. We propose a refinement which makes the algorithm
significantly faster. 

Let $K$ be a number field, and $\CO_K$ the ring of integers of $K$. Let $\sigma_1,\,\ldots,\,\sigma_{r_1}$ be 
the real embeddings of $K$, and $\tau_1, \bar{\tau}_1,\,\ldots,\,\tau_{r_2},\bar{\tau}_{r_2}$ the  complex embeddings, 
so that $[K:\Q] = r_1 + 2r_2$. For each of these embeddings $\sigma$, the absolute $|\,\, |_\sigma$ is
given by $|x|_{\sigma} = |x|_{\C}$, where $|\,\,|_{\C}$ is the usual absolute value over $\C$, and that
$|\,\,|_{\tau_i} = |\,\,|_{\bar{\tau}_i}$, $i = 1,\,\ldots,\,r_2$. We let $M_K^\infty$ be the set of archimedian
absolute values. 

For a prime ideal $\gp$ of $\CO_K$, let $v_\gp:\,K \to \Z \cup \{\infty\}$ be the discrete valuation at $\gp$.
We recall that, for $x \in \CO_K$ nonzero, $v_\gp(x)$ is the largest integer $n\ge 0$ such that $\gp^n$ 
divides the ideal $(x)$. The absolute $|\,\,|_\gp$ on $K$ is defined by $|x|_\gp^{-v_\gp(x)/(e_\gp f_\gp)}$, where $e_\gp$
and $f_\gp$ are the ramification index and inertia degree respectively; it extends the $p$-adic absolute
value on $\Q$ where $p$ is the unique prime below $\gp$.  We let $M_K^0$ be the set of absolute 
values $|\,\,|_\gp$, and $M_K = M_K^\infty \cup M_K^0$.

For $v \in M_K$, let $K_v$ be the completion of $K$ at $v$, and $\Q_v$ the completion of $\Q$ 
at the restriction of $v$ to $\Q$. We let $n_v := [K_v: \Q_v]$ be the local degree at $v$. If $v$ is a real
place, then $K_v = \Q_v$ hence $n_v = 1$. If $v$ is a complex place, then $K_v = \C$
and $\Q_v = \R$, hence $n_v = 2$. If $v = v_\gp$ is a non-archimedean place, then $\Q_v = \Q_p$, 
where $p$ is the prime below $\gp$. In that case, $n_v = e_\gp f_\gp$. 

We define the {\it height} function $H_K:\,K \to \R_{>0}$ by 
$$H_K(x) = \prod_{v \in M_K} \max\{|x|_v^{n_v}, 1\}.$$
The function $H_K$ satisfies the following properties:
\begin{itemize}
\item For all $a, b \in K$, with $b \neq 0$,
$$H_K(a/b) =  \prod_{v \in M_K} \max\{|a|_v^{n_v}, |b|_v^{n_v}\};$$

\item For all $a, b \in K$, with $b \neq 0$,   
$$H_K(a/b) = \N_{K/\Q}(a,b)^{-1}\prod_{v \in M_K} \max\{|a|_v^{n_v}, |b|_v^{n_v}\},$$
where $\N_{K/\Q}(a,b)$ is the norm of the ideal generated by $a$ and $b$.

\item For all $a, b \in K$, $$H_K(ab) \le H_K(a)H_K(b).$$

\item For any $x \in K^\times$, $$H_K(x) = H_K(1/x).$$

\item For any $x \in K$, and $\zeta \in \mu_K$ a root of unity, $$H_K(\zeta x) = H_K(x).$$
\end{itemize}

For $x \in K$ non-zero, we define the {\it numerator ideal} and {\it denominator ideal} of $x$
\begin{align*}
\ga := \prod_{v_\gp(x) > 0} \gp^{v_\gp(x)},\,\,\text{and}\,\,\, \gb := \prod_{v_\gp(x) < 0} \gp^{-v_\gp(x)},
\end{align*} 
respectively. We note that $\gcd(\ga, \gb) = 1$, and that $\ga$ and $\gb$ belong to the same ideal class since 
$(x) = \ga \gb^{-1}$.

\begin{lem}\label{lem:nd-ideals} Let $x \in K^\times$, and let $\ga$ and $\gb$ the numerator and denominator ideals of $x$ respectively. 
If $H_K(x) \le B$, then we have $\N_{K/\Q}(\ga), \N_{K/\Q}(\gb) \le B$. 
\end{lem}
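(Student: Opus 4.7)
The plan is to extract the bound directly from the definition of $H_K$ and compare the local factors at finite places with the norm of $\gb$ (and then dualize to get $\ga$). First I would split the product defining $H_K(x)$ into its archimedean and non-archimedean parts:
$$H_K(x) = \prod_{v \in M_K^\infty} \max\{|x|_v^{n_v}, 1\} \cdot \prod_{\gp} \max\{|x|_\gp^{n_\gp}, 1\}.$$
Since each factor in the archimedean product is at least $1$, it suffices to show that the non-archimedean factor equals $\N_{K/\Q}(\gb)$.

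Next I would unwind the normalization of the $\gp$-adic absolute value to verify that $|x|_\gp^{n_\gp} = \N_{K/\Q}(\gp)^{-v_\gp(x)}$ for every prime $\gp$. Consequently the local factor $\max\{|x|_\gp^{n_\gp}, 1\}$ equals $1$ precisely when $v_\gp(x) \ge 0$, and equals $\N_{K/\Q}(\gp)^{-v_\gp(x)}$ when $v_\gp(x) < 0$. Taking the product over all finite places singles out exactly the primes dividing $\gb$, and the product of their contributions is
$$\prod_{v_\gp(x) < 0} \N_{K/\Q}(\gp)^{-v_\gp(x)} = \N_{K/\Q}(\gb).$$
Combining this with the previous step gives $H_K(x) \ge \N_{K/\Q}(\gb)$, so $\N_{K/\Q}(\gb) \le B$.

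For the numerator ideal I would invoke the listed symmetry $H_K(x) = H_K(1/x)$. Under $x \mapsto 1/x$, the numerator and denominator ideals swap, so $\ga$ is the denominator ideal of $1/x$. Applying the inequality already established to $1/x$ yields $\N_{K/\Q}(\ga) \le H_K(1/x) = H_K(x) \le B$, which completes the argument. There is no serious obstacle; the only point requiring care is fixing the local normalization so that $|x|_\gp^{n_\gp} = \N_{K/\Q}(\gp)^{-v_\gp(x)}$, after which the conclusion falls out of the definition of $H_K$.
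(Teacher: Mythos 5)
Your proposal is correct and follows essentially the same route as the paper: split $H_K(x)$ into archimedean and non-archimedean factors, identify the finite-place product as $\N_{K/\Q}(\gb)$ via the normalization $|x|_\gp^{n_\gp} = \N_{K/\Q}(\gp)^{-v_\gp(x)}$, note the archimedean factors are each $\ge 1$, and then apply the symmetry $H_K(x) = H_K(1/x)$ to handle $\ga$. Your remark about pinning down the local normalization is apt, since as printed the paper's definition of $|\,\,|_\gp$ has a typo (it should read $|x|_\gp = p^{-v_\gp(x)/e_\gp}$), but you correctly identify the intended identity.
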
 

\begin{proof} By definition, we have
\begin{align*}
H_K(x) &= \prod_{v} \max\{1, |x|_v^{n_v}\} = \prod_{v\mid \infty} \max\{1, |x|_v^{n_v}\} \prod_{v\nmid\infty} \max\{1, |x|_v^{n_v}\}\\
&=\prod_{v\mid \infty} \max\{1, |x|_v^{n_v}\} \prod_{\gp\nmid\infty \atop v_\gp(x)< 0} \max\{1, \N_{K/\Q}(\gp)^{-v_\gp(x)} \} \\
&\qquad\qquad{} \times \prod_{\gp\nmid\infty \atop v_\gp(x)>0} \max\{1, \N_{K/\Q}(\gp)^{-v_\gp(x)}\}\\
&= \N_{K/\Q}(\gb)\prod_{v\mid \infty} \max\{1, |x|_v^{n_v}\}.
\end{align*}
Therefore, $H_K(x) \le B$ implies that $\N_{K/\Q}(\gb) \le B$. Since $H_K(x) = H_K(1/x)$, we also get that $H_K(x) \le B$ implies that $\N_{K/\Q}(\ga) \le B$. 
\end{proof}

For a given positive real number $B$, the algorithm below computes all the elements $x \in K$ such that $H_K(x) \le B$.

\begin{alg}\label{alg:bd-height}\rm Given a number field $K$, and a positive real number $B$, this output all elements $x \in K$ such that $H_K(x) \le B$.
\begin{enumerate}
\item Compute the list $\mathscr{L}$ of all integral ideals $\gc$ such that $\N_{K/\Q}(\gc) \le B$;
\item Initiate the list $\mathscr{G} = \emptyset$. For each $\ga,\gb \in \mathscr{L}$ such that $\ga \gb^{-1}$ is principal and $\gcd(\ga, \gb) = 1$,
find a generator $\xi$ and append to $\mathscr{G}$;
\item Let $B_0 := \max\{H_K(\xi): \xi \in \mathscr{G}\}$, and compute the set $\mathscr{U}$ of units $u \in \CO_K^\times$ such that $H_K(u) \le B\cdot B_0$;
\item Initiate $\mathscr{S} = \{0\}$. For each $\xi \in \mathscr{G}$ such that $H_K(u\xi) \le B$, append $u\xi $ to $\mathscr{S}$.
\item Return $\mathscr{S}$. 
\end{enumerate}
\end{alg}

\begin{thm}\label{thm:bd-height} Given as input a number field $K$, and a positive integer $B$, Algorithm~\ref{alg:bd-height} outputs the set $\mathscr{S}$ of
all elements $x \in K$ such that $H_K(x) \le B$. 
\end{thm}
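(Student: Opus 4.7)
The plan is to prove the set-theoretic equality $\mathscr{S} = \{x \in K : H_K(x) \le B\}$ by showing both inclusions. Soundness ($\mathscr{S} \subseteq \{x : H_K(x) \le B\}$) is immediate from the construction: the element $0$ is initialized in $\mathscr{S}$, and every other element appended in step (4) is a product $u\xi$ for which the inequality $H_K(u\xi) \le B$ has been explicitly verified. So no work is required here beyond noting the check in step (4).

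For completeness, I would take an arbitrary $x \in K^\times$ with $H_K(x) \le B$ and exhibit a $\xi \in \mathscr{G}$ and a $u \in \mathscr{U}$ such that $x = u\xi$ passes the test in step (4). Writing $(x) = \ga \gb^{-1}$ with $\ga, \gb$ the numerator and denominator ideals of $x$, these are coprime by definition, and Lemma~\ref{lem:nd-ideals} gives $\N_{K/\Q}(\ga), \N_{K/\Q}(\gb) \le B$, so both ideals belong to $\mathscr{L}$. Because $\ga\gb^{-1} = (x)$ is principal, step (2) enumerates some generator $\xi$ with $(\xi) = \ga\gb^{-1}$ and adds it to $\mathscr{G}$. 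Then $u := x/\xi$ is a unit of $\CO_K$, and using submultiplicativity $H_K(ab) \le H_K(a)H_K(b)$ together with the inversion-invariance $H_K(\xi^{-1}) = H_K(\xi)$ (both listed just before the algorithm), we get
$$H_K(u) = H_K(x/\xi) \le H_K(x)\,H_K(\xi) \le B \cdot B_0,$$
so $u \in \mathscr{U}$. The pair $(u,\xi)$ is then considered in step (4), the height test $H_K(u\xi) = H_K(x) \le B$ succeeds, and $x$ is appended to $\mathscr{S}$.

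The step I expect to require the most care is justifying the uniform unit bound $B \cdot B_0$. The argument above shows that submultiplicativity of $H_K$ plus the uniform estimate $H_K(\xi) \le B_0$ for $\xi \in \mathscr{G}$ is exactly what is needed; in fact, one could refine the algorithm to use the sharper per-$\xi$ bound $B \cdot H_K(\xi)$, which may be worth remarking on for efficiency but does not affect correctness. A secondary point I would address is effectivity: step (1) is standard (enumerate integral ideals of bounded norm via prime factorizations), and step (3) reduces, via Dirichlet's unit theorem and the logarithmic embedding, to enumerating lattice points in the compact region cut out by $H_K(u) \le B \cdot B_0$. Finiteness of $\mathscr{L}$, $\mathscr{G}$, and $\mathscr{U}$ then forces $\mathscr{S}$ to be finite, consistent with Northcott's theorem.
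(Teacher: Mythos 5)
Your proof is correct and follows essentially the same route as the paper: appeal to Lemma~\ref{lem:nd-ideals} to put both $\ga$ and $\gb$ in $\mathscr{L}$, pick the stored generator $\xi$ of $\ga\gb^{-1}$, and bound $H_K(u)$ for $u = x/\xi$ by submultiplicativity together with $H_K(\xi^{-1}) = H_K(\xi)$. The only differences are cosmetic — you spell out the soundness direction, the handling of $x = 0$, and a few remarks on effectivity that the paper leaves implicit — so there is nothing further to flag.
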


\begin{proof} Let $x \in K^\times$ such that $H_K(x) \le B$. Let $\ga$ and $\gb$ be the numerator and denominator ideals of $x$. Then by Lemma~\ref{lem:nd-ideals},
we have $\N_{K/\Q}(\ga), \N_{K/\Q}(\gb) \le B$. So $\ga$ and $\gb$ belong to $\mathscr{L}$. Let $\xi$ be the generator of $\ga\gb^{-1}$ contained in $\mathscr{G}$.
Then, $(x) = (\xi) = \ga\gb^{-1}$. Therefore, there is a unit $u \in \CO_K^\times$ such that $x = u \xi$. It remains to show that $H_K(x) \le B$ implies that 
$H_K(u) \le B\cdot B_0$. This follows from $u = x \xi^{-1}$, and the third and fourth properties of height functions. 
\end{proof}

Algorithm~\ref{alg:bd-height} is a slight variation on Algorithm 1 and its refinements described in~\cite{dokr15}.  In~\cite{dokr15}, one computes the set of units 
$\mathscr{U}$ by enumerating rational points in a polytope. However, that process becomes extremely slow as soon as the degree of the field exceeds $4$. 
One can substantially improve on this by using the following lemma. Let $r = r_1 + r_2 - 1$, and $\lambda: \CO_K^\times \to \R^{r+1}$ 
be defined by $$\lambda(u) := (\log |u|_{\sigma_1},\ldots,\log|u|_{\sigma_{r_1}}, \log|u|_{\tau_1}^2,\ldots,\log|u|_{\tau_{r_2}}^2).$$
Let $\|\cdot\|: \R^{r+1} \to \R_{\ge 0}$ denote the Euclidean norm. 

\begin{lem}\label{lem:units} Let $x \in \CO_K^\times$ be such that $H_K(u) \le B$. Then, we have
$$\|\lambda(u)\|^2 \le 2(\log B)^2.$$ 
\end{lem}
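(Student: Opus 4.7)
The plan is to exploit two facts that make units especially simple for height computations: their non-archimedean absolute values are all $1$, and the product formula forces the archimedean logarithmic vector $\lambda(u)$ to lie in the trace-zero hyperplane.

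First I would observe that for $u \in \CO_K^\times$ and any prime $\gp$ of $\CO_K$, we have $v_\gp(u) = 0$, so $|u|_\gp = 1$. The defining product in $H_K(u)$ therefore collapses to the archimedean contribution:
$$H_K(u) = \prod_{v \mid \infty} \max\{1, |u|_v^{n_v}\}.$$
Writing $\lambda(u) = (\ell_1,\ldots,\ell_{r+1})$, each coordinate is precisely $\log|u|_v^{n_v}$ for the corresponding archimedean place $v$, so
$$\log H_K(u) = \sum_{\ell_i > 0} \ell_i.$$
Next, the product formula $\prod_v |u|_v^{n_v} = 1$, combined with $|u|_\gp = 1$ at the finite places, gives $\sum_i \ell_i = 0$, so if I set $P := \sum_{\ell_i > 0} \ell_i$ and $N := -\sum_{\ell_i < 0} \ell_i$ then $P = N = \log H_K(u)$.

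Now I would use the elementary inequality that for nonnegative reals $a_1,\ldots,a_k$,
$$\sum_j a_j^2 \le \Bigl(\sum_j a_j\Bigr)^2.$$
Applying this separately to the positive coordinates and to the absolute values of the negative coordinates of $\lambda(u)$ yields
$$\sum_{\ell_i > 0} \ell_i^2 \le P^2 \quad \text{and} \quad \sum_{\ell_i < 0} \ell_i^2 \le N^2.$$
Adding these bounds gives $\|\lambda(u)\|^2 \le P^2 + N^2 = 2(\log H_K(u))^2 \le 2(\log B)^2$, which is exactly the claim.

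There is really no hard step here; the only thing to be careful about is bookkeeping with the weights $n_v$ so that the coordinates of $\lambda(u)$ match $\log|u|_v^{n_v}$ in both the real and complex cases, which is already baked into the stated definition of $\lambda$. The sharpness of the constant $2$ (versus a naive $4$ from $\sum|\ell_i| = 2\log H_K(u)$ and $\|\lambda\|_2 \le \|\lambda\|_1$) comes precisely from splitting the sum according to sign before invoking the $\sum a_j^2 \le (\sum a_j)^2$ inequality.
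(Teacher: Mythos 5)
Your proof is correct and follows essentially the same route as the paper: observe the finite places contribute nothing for a unit, use the product formula to equate the positive and negative parts of $\sum_v \log|u|_v^{n_v}$ to $\log H_K(u)$, and then apply $\sum_j a_j^2 \le (\sum_j a_j)^2$ separately to the two sign classes to get the factor of $2$. The only difference is that you spell out the elementary inequality and the product-formula bookkeeping, whereas the paper compresses both into a single displayed chain.
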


\begin{proof} We have

\begin{align*}
\log H_K(u) &= \sum_{v \mid \infty \atop |u|_v > 1 } \log |u|_v^{n_v} = -\sum_{v \mid \infty \atop |u|_v < 1}\log |u|_v^{n_v}.
\end{align*}
So, if $H_K(u) \le B$, then we have that
\begin{align*}
\|\lambda(u)\|^2 = \sum_{v \mid \infty}\left(\log|u|_v^{n_v}\right)^2 \le 2\left(\sum_{v \mid \infty\atop |u|_v > 1}\log |u|_v^{n_v}\right)^2 \le 2(\log B)^2. 
\end{align*}
\end{proof}

\begin{rem}\rm First, let $\gc_i$, $i = 1,\ldots, h$, be a complete set of representatives for the classes in $\Cl(K)$. If, $\gc$ is an integral ideal such that $\N_{K/\Q}(\gc) \le B$,
then there exists $\xi_i \in K^\times$, which is $\gc_i$-integral, such that $\gc = \xi_i \gc_i$ and $\N_{K/\Q}(\xi_i) \le \frac{B}{\N_{K/\Q}(\gc_i)}$. 
So, in practice, one does a class group precomputation for more efficiency. Then one combines Steps (1) and (2) by listing all the $\xi \in K^\times$ 
such that $\N_{K/\Q}(\xi) \le B$, and $\xi$ is $\gc_i$-integral for some $i = 1,\ldots, h$.

Second, the output set $\mathscr{S}$ of Algorithm~\ref{alg:bd-height} tends to be big as the degree of the field $K$ or the height bound $B$ 
grows. In practice, we found that it was more useful to have a variant of the algorithm which enumerates elements with a fixed denominator ideal $\gb$.
One can then vary the denominator if needed.

Finally, by Lemma~\ref{lem:units}, the problem of enumerating the unit set $\mathscr{U}$ in Step (3) becomes one of enumerating lattice points. This can be done very efficiently 
by using LLL algorithms, leading to substantial improvements in Step (3) of Algorithm~\ref{alg:bd-height}, which make the overall algorithm much faster. There is a 
great level of care and details in the algorithms described in~\cite{dokr15} and implemented in \verb|Sage|. The variations we proposed here will add significantly to
their efficiency as demonstrated by our own implementation.
\end{rem}

\section{\bf Galois representations attached to abelian surfaces}\label{sec:2-tors-flds}
In this section, we recall some useful results on the Galois representation on the $2$-torsion points of an abelian surface. 
We start with the following well-known lemma whose proof we couldn't find in the literature. 

\begin{lem}\label{lem:groups} Let $\F_2[\varepsilon] = \F_2[x]/(x^2)$. Then, there is a group isomorphism $\phi:\, \SL_2(\F_2[\varepsilon]) \simeq \Z/2\Z \times S_4$,
which identifies $\ker(\SL_2(\F_2[\varepsilon]) \to \SL_2(\F_2)) $ with the unique normal subgroup $N \simeq (\Z/2\Z)^3$ of $\Z/2\Z \times S_4$ of order $8$. 
\end{lem}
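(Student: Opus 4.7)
The plan is to realise $\SL_2(\F_2[\varepsilon])$ as a semidirect product $K \rtimes S_3$ and then split off the centre as an explicit direct factor. Using $|\F_2[\varepsilon]|=4$ and $|\F_2[\varepsilon]^\times|=2$, one first computes $|\SL_2(\F_2[\varepsilon])|=48$, and identifies the kernel of the reduction map $\pi\colon \SL_2(\F_2[\varepsilon]) \to \SL_2(\F_2)$ as
\[
K \;=\; \{I + \varepsilon M : M \in M_2(\F_2),\ \trace M = 0\},
\]
using $\det(I + \varepsilon M) = 1 + \varepsilon\,\trace M$. Thus $K$ is elementary abelian of order $2^3$ and identifies naturally (as a set, and as an $\SL_2(\F_2)$-module under conjugation) with the trace-zero Lie algebra $\mathfrak{sl}_2(\F_2)$. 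The inclusion $\F_2 \hookrightarrow \F_2[\varepsilon]$ splits $\pi$, so $\SL_2(\F_2[\varepsilon]) = K \rtimes \SL_2(\F_2) = K \rtimes S_3$, with $S_3$ acting on $K \simeq \mathfrak{sl}_2(\F_2)$ via the adjoint representation.

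The crux is to split $K$ as an $\F_2[S_3]$-module. The identity $I$ lies in $\mathfrak{sl}_2(\F_2)$ (since $\trace I=0$ in characteristic~$2$) and spans a trivial submodule $Z=\langle I\rangle$; the corresponding element $(1+\varepsilon)I$ is central in $\SL_2(\F_2[\varepsilon])$. I would exhibit an $S_3$-invariant complement $W$ explicitly: with $E=\begin{pmatrix}0&1\\0&0\end{pmatrix}$ and $F=\begin{pmatrix}0&0\\1&0\end{pmatrix}$, the subspace $W=\{0,\,E+F,\,I+E,\,I+F\}$ of $\mathfrak{sl}_2(\F_2)$ is closed under conjugation by the generators $\begin{pmatrix}1&1\\0&1\end{pmatrix}$ and $\begin{pmatrix}0&1\\1&1\end{pmatrix}$ of $\SL_2(\F_2)$, as a direct calculation shows, and its three nonzero vectors are permuted by $S_3$ as the full symmetric group; hence $W \simeq \F_2^{\,2}$ with the standard $\SL_2(\F_2)$-action. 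Since $2 \mid |S_3|$, Maschke's theorem does not guarantee such a complement a priori, so this explicit verification is the one nontrivial step.

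Assembling, $K = Z \oplus W$ as $\F_2[S_3]$-modules, whence
\[
\SL_2(\F_2[\varepsilon]) \;=\; (Z \oplus W) \rtimes S_3 \;=\; Z \times (W \rtimes S_3).
\]
The factor $W \rtimes S_3 = \F_2^{\,2} \rtimes \SL_2(\F_2)$ is the affine group $\mathrm{AGL}_2(\F_2)$, which acts faithfully on the four points of $\F_2^{\,2}$; as it has order $24$, it equals $S_4$. This yields the desired isomorphism $\phi\colon \SL_2(\F_2[\varepsilon]) \xrightarrow{\sim} \Z/2\Z \times S_4$, under which $K$ corresponds to $Z \times W \simeq \Z/2\Z \times V_4$. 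For the uniqueness assertion, I would intersect any normal $(\Z/2\Z)^3$-subgroup $N$ of $\Z/2\Z \times S_4$ of order $8$ with $\{1\} \times S_4$: the intersection is a normal subgroup of $S_4$ that must have order $4$ (order $8$ is impossible, as $S_4$ has no normal subgroup of that order), so it equals $V_4$; a short index comparison then forces $N = \Z/2\Z \times V_4$.
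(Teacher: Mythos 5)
Your proof is correct, and it takes a genuinely different, more structural route than the paper's. The paper simply exhibits a pair of generators for $\Z/2\Z \times S_4$ (embedded in $S_6$ via $\sigma = (1,2,4,5)$, $\tau = (1,3)(4,6)$) and a pair of generators $A, B$ for $\SL_2(\F_2[\varepsilon])$, and asserts that $\sigma \mapsto A$, $\tau \mapsto B$ extends to an isomorphism, leaving the verification, the identification of the kernel, and the uniqueness of $N$ implicit. Your argument instead exhibits the short exact sequence $1 \to K \to \SL_2(\F_2[\varepsilon]) \to \SL_2(\F_2) \to 1$, splits it via $\F_2 \hookrightarrow \F_2[\varepsilon]$, identifies $K$ with $\mathfrak{sl}_2(\F_2)$ under the adjoint $S_3$-action, and then decomposes $K = \langle I\rangle \oplus W$ by exhibiting the explicit $S_3$-stable complement $W$ (correctly flagging that Maschke does not apply in characteristic $2$), giving $\SL_2(\F_2[\varepsilon]) \simeq Z \times (W \rtimes S_3)$ with $W \rtimes S_3 \simeq \mathrm{AGL}_2(\F_2) \simeq S_4$ and $K \mapsto \Z/2\Z \times V_4$. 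This buys you more: it makes the $\SL_2(\F_2)$-module structure on the kernel transparent (which is exactly what is exploited later in Theorem~\ref{thm:2-tors-fld-sqrt1997}), and it actually proves the uniqueness assertion, which the paper's proof never addresses. Two small expository points to tighten: (i) when you intersect $N$ with $\{1\}\times S_4$, you should also rule out orders $1$ and $2$, not only $8$ ($S_4$ has no normal subgroup of order $2$, and $N \cap S_4 = 1$ would force $N$ to inject into $\Z/2\Z$, too small); (ii) the final ``index comparison'' alone does not pin down $N$ --- you need to observe that $N$, being abelian and containing $\{1\}\times V_4$, lies in the centraliser of $\{1\}\times V_4$, which is $\Z/2\Z \times V_4$ (equivalently, that $N/V_4$ is a normal order-$2$, hence central, subgroup of $\Z/2\Z \times S_3$, so equals the $\Z/2\Z$ factor since $Z(S_3)=1$). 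With those words added, the argument is complete.
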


\begin{proof} As a subgroup of $S_6$, $\Z/2\Z\times S_4$ is generated by the two permutations $\sigma := (1, 2, 4, 5)$ and $\tau := (1, 3)(4, 6)$.
Similarly, the group $\SL_2(\F_2[\varepsilon])$ is generated by the matrices
\begin{align*}
A := \begin{pmatrix} \varepsilon + 1 & \varepsilon \\ 1 & 1\end{pmatrix},\,\,
B := \begin{pmatrix} 0 & 1\\ 1 & 0\end{pmatrix}. 
\end{align*}
The map $(\sigma \mapsto A,\, \tau\mapsto B)$ gives the desired isomorphism. 
\end{proof}

The following result is often attributed to Mumford, though it was already known in the 19th century.  

\begin{thm}
Let $C:\,y^2 = R(x)$ be a curve of genus $2$, and $A = \text{Jac}(C)$ its Jacobian. 
Let $\bar{\rho}_{A, 2}: \Gal(\Qbar/F) \to \Sp_{4}(\F_2)$ be the mod $2$ Galois representation attached to $A$,
and $L$ the fixed field of $\ker(\bar{\rho}_{A,2})$. Then, $L$ is the  splitting field of the polynomial $R(x)$.
\end{thm}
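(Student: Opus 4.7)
The plan is to make the structure of $A[2]$ explicit in terms of the Weierstrass points of $C$, so that the Galois action on $A[2]$ visibly factors through, and is faithfully detected by, the permutation action of $\Gal(\Qbar/F)$ on the roots of $R(x)$.

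First, I would recall the classical description of $2$-torsion on a hyperelliptic Jacobian. Write $R(x) = c\prod_{i=1}^{n}(x - \alpha_i)$ with $n \in \{5,6\}$, and set $W_i = (\alpha_i, 0) \in C$. In the odd-degree case $n=5$, the unique point $\infty \in C$ is itself a Weierstrass point, and the classes $D_i = [W_i - \infty] \in A$ lie in $A[2]$ since $\divisor(x-\alpha_i) = 2W_i - 2\infty$. The identity $\divisor(y) = \sum_i W_i - 5\infty$ gives the single relation $\sum_i D_i = 0$, and since $|A[2]| = 2^{2g} = 16 = 2^4$, the $D_i$ must already span $A[2]$. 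The even-degree case $n=6$ is parallel, with $A[2]$ generated by differences $[W_i - W_j]$, and in fact canonically identified with the $\F_2$-vector space of even-cardinality subsets of $\{W_1,\dots,W_6\}$ modulo $\{\emptyset,\text{full set}\}$ under symmetric difference.

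Second, because each generator $D_i$ is built functorially from the single point $W_i$ (and thus the root $\alpha_i$), the action of $\Gal(\Qbar/F)$ on $A[2]$ is induced by its permutation action on $\{W_1,\ldots,W_n\}$, which is nothing but its permutation action on $\{\alpha_1,\ldots,\alpha_n\}$. I would then deduce $L = K$ from two quick containments. If $\sigma \in \ker(\bar\rho_{A,2})$, then $\sigma$ fixes every $D_i$; the effective divisor representing the class $D_i + [\infty]$ is the single point $W_i$ (uniquely, as distinct points on a curve of positive genus are linearly inequivalent), so $\sigma$ fixes each $W_i$ and hence each $\alpha_i$, giving $K \subseteq L$. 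Conversely, any $\sigma$ fixing all $\alpha_i$ fixes all $W_i$, hence all $D_i$, hence all of $A[2]$; thus $L \subseteq K$.

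The main obstacle is the cosmetic one that the cleanest description of $A[2]$ uses a rational Weierstrass base point at infinity, which is only automatic in the odd-degree case. In the even-degree case one either invokes the symmetric-difference description directly, or applies a Möbius change of variable on $x$ (over $F$, or over a mild auxiliary extension if no root of $R$ is rational) to move a root to infinity; neither operation alters the isomorphism class of $A$ over $F$ or the splitting field of $R$, and the identification of $A[2]$ as a Galois module built functorially from the roots of $R$ persists. With that in hand, the equality $L = K$ follows from the two containments above.
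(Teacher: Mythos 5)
The paper states this theorem as a classical fact (attributed to Mumford, known in the 19th century) and gives no proof, so there is no internal argument to compare against; I assess yours on its own terms. Your overall strategy --- identify $A[2]$ with divisor classes supported on the Weierstrass points, so the Galois action factors through and is faithfully detected by the permutation of the roots of $R$ --- is the standard one, and the ``unique effective representative in $\mathrm{Pic}^1$'' step that recovers $W_i$ from $[W_i]$ is deployed correctly.

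The one genuine gap is in the claim that the $D_i = [W_i - \infty]$ span $A[2]$. You record the relation $\sum_i D_i = 0$ and then conclude from $|A[2]| = 2^4$ that the $D_i$ span. This reasoning runs the wrong way: five order-two generators with one known relation produce a subgroup of order \emph{at most} $2^4$, whereas to conclude it equals $A[2]$ you must rule out further relations, i.e.\ show that for a nonempty proper even subset $S\subsetneq\{1,\dots,5\}$ one has $\sum_{i\in S} W_i \not\sim |S|\cdot\infty$. That is a classical fact, but it requires an argument (a short Riemann--Roch computation showing that $|d\infty|$ for $d\le 4$ never contains a reduced sum of distinct finite Weierstrass points, or the standard theta-characteristic description of $A[2]$ as even subsets of Weierstrass points modulo complements) or at least a citation; the size count alone does not establish it. The spanning is not cosmetic in your proof: it is exactly what you use to pass from ``$\sigma$ fixes all $D_i$'' to ``$\sigma$ fixes all of $A[2]$'' in the inclusion $L\subseteq K$. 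A secondary issue: the M\"obius reduction for the sextic case over an auxiliary extension $F'=F(\alpha_1)$ gives you $LF'=K$, which yields $L\subseteq K$ but not, by itself, $F'\subseteq L$ and hence not $K\subseteq L$; for that direction you would have to reinvoke the intrinsic argument anyway, so the even-subset description you also offer is the clean route there.
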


%\begin{proof} Need a reference.
%\end{proof}

\begin{thm}\label{thm:2-torsion-grps} Let $A$ be an abelian surface defined over a field $k$ of characteristic zero, which has RM
by the maximal order $\CO$ of some quadratic field. Also let $\bar{\rho}_{A, 2}: \Gal(\Qbar/F) \to \GL_2(\CO\otimes \F_2)$ be the 
mod $2$ Galois representation attached to $A$, and $G=\mathrm{im}(\bar{\rho}_{A,2})$. Then,
we have the following:
\begin{enumerate}[(i)] 
\item If $2$ is inert in $\CO$ then $G\hookrightarrow A_5$.
\item If $2$ is split in $\CO$ then $G\hookrightarrow S_3\times S_3$.
\item If $2$ is ramified in $\CO$, then $G$ is a subgroup of $S_4 \times \Z/2\Z$, and
there is an exact sequence $0\to H\to G\to H'\to 0$ where
$H\hookrightarrow (\Z/2\Z)^3$ and $H'\hookrightarrow S_3$. In fact, we have $G \simeq H \rtimes H'$.
\end{enumerate}
\end{thm}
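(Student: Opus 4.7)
The plan is to decompose $\CO\otimes\F_2$ according to how $2$ behaves in $\CO$, and in each case read off the structure of $\GL_2(\CO\otimes\F_2)$ and the constraint imposed by the determinant of $\bar\rho_{A,2}$. Since $A$ is of $\GL_2$-type with RM by $\CO$, the determinant of the $\lambda$-adic representation equals the $\ell$-adic cyclotomic character $\chi_\ell$, viewed inside $(\CO\otimes\Z_\ell)^\times$ via $\Z_\ell^\times \hookrightarrow (\CO\otimes\Z_\ell)^\times$; reducing modulo $2$ then yields $\det\bar\rho_{A,2}=1$.

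For case (i), $\CO\otimes\F_2\cong\F_4$, and triviality of the determinant places $G$ in $\SL_2(\F_4)$, which is well known to be isomorphic to $A_5$. For case (ii), $\CO\otimes\F_2\cong\F_2\times\F_2$, so $\GL_2(\CO\otimes\F_2)\cong\GL_2(\F_2)\times\GL_2(\F_2)$, and the identification $\GL_2(\F_2)=S_3$ gives $G\hookrightarrow S_3\times S_3$ directly (no determinant condition is needed since $\GL_2(\F_2)=\SL_2(\F_2)$).

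For case (iii), $\CO\otimes\F_2\cong\F_2[\varepsilon]$ with $\varepsilon^2=0$, and the determinant condition gives $G\subset\SL_2(\F_2[\varepsilon])$. Invoking Lemma~\ref{lem:groups} embeds this into $\Z/2\Z\times S_4$, proving the first assertion. The reduction $\F_2[\varepsilon]\to\F_2$ induces $\pi:\SL_2(\F_2[\varepsilon])\twoheadrightarrow\SL_2(\F_2)=S_3$, whose kernel corresponds via the lemma to the unique normal subgroup $(\Z/2\Z)^3$ of $\Z/2\Z\times S_4$ of order $8$. Setting $H:=G\cap\ker\pi$ and $H':=\pi(G)$ produces the exact sequence with the stated embeddings. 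The splitting $G\simeq H\rtimes H'$ is then deduced from the ambient split structure $\SL_2(\F_2[\varepsilon])\cong(\Z/2\Z)^3\rtimes S_3$: Schur--Zassenhaus handles the $3$-part of $H'$ automatically, and a case analysis on the remaining $2$-part, combined with the ambient section, produces a compatible lift.

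The main technical obstacle is the determinant computation, which underlies both cases (i) and (iii) and relies essentially on the $\GL_2$-type hypothesis (so that $\det\rho_{A,\lambda}=\chi_\ell$ as characters into $\CO_\lambda^\times$). A second delicate point is the semi-direct product assertion in (iii): producing a section of $G\to H'$ that descends from the ambient split requires exploiting the specific realization of $(\Z/2\Z)^3$ inside $\SL_2(\F_2[\varepsilon])$ provided by Lemma~\ref{lem:groups}, and checking that an order-$2$ generator of $H'$ admits an order-$2$ preimage in $G$.
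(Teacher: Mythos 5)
Your proposal takes a genuinely different route from the paper. The paper's proof is a two-line citation: parts~(i) and~(ii) follow from Wilson~\cite[Corollary 4.3.4]{wil98}, and part~(iii) is obtained by combining that corollary with Lemma~\ref{lem:groups}. You instead try to re-derive the statement from first principles, decomposing $\CO\otimes\F_2$ by splitting type, pinning the image inside $\SL_2$ via the determinant-equals-cyclotomic-character identity for $\GL_2$-type surfaces, and then reading off the groups $\SL_2(\F_4)\cong A_5$, $\GL_2(\F_2)\times\GL_2(\F_2)\cong S_3\times S_3$, and $\SL_2(\F_2[\varepsilon])$. This is more informative than the paper's citation (and the determinant input is exactly the kind of thing Wilson's corollary packages up), so the overall strategy is sound and the arguments for (i) and (ii) are correct as sketched.

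However, there is a genuine gap in your treatment of the splitting $G\simeq H\rtimes H'$ in~(iii). The ambient splitting $\SL_2(\F_2[\varepsilon])\cong(\Z/2\Z)^3\rtimes S_3$ does \emph{not} by itself force the sequence $1\to H\to G\to H'\to 1$ to split for an arbitrary subgroup $G$, and your appeal to Schur--Zassenhaus only covers the prime-to-$2$ part of $H'$. Concretely, the matrix
\[
M=\begin{pmatrix}0&1\\1&\varepsilon\end{pmatrix}\in\SL_2(\F_2[\varepsilon])
\]
has order $4$: $M^2=\begin{pmatrix}1&\varepsilon\\\varepsilon&1\end{pmatrix}$ lies in $\ker\pi$ and $M^4=I$. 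Taking $G=\langle M\rangle\cong\Z/4\Z$ gives $H=G\cap\ker\pi\cong\Z/2\Z$ and $H'=\pi(G)\cong\Z/2\Z$, yet $\Z/4\Z\not\simeq\Z/2\Z\rtimes\Z/2\Z$ (the only semidirect product here is $\Z/2\Z\times\Z/2\Z$, since $\Aut(\Z/2\Z)$ is trivial). So the ``case analysis on the remaining $2$-part'' cannot succeed at the level of generality you are working at: whatever forces the splitting in the theorem must be a constraint specific to Galois images arising from abelian surfaces with RM (the content of Wilson's Corollary~4.3.4), not a group-theoretic feature of $\SL_2(\F_2[\varepsilon])$. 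Your proposal needs to either invoke and verify that extra input, or restrict the claim to the cases Schur--Zassenhaus actually covers.
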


\begin{proof} The first and second statements follow from Wilson~\cite[Corollary 4.3.4]{wil98}.
To prove the third, we note that, since $2$ is ramified in $\CO$, $\CO \otimes \Z_2 \simeq \F_2[\varepsilon]$, with $\varepsilon^2 = 0$.
Then, we conclude by combining Lemma~\ref{lem:groups} and \cite[Corollary 4.3.4]{wil98}.
\end{proof}

\section{\bf Fontaine bound for finite group schemes}\label{sec:fontaine-bds}

The following theorem plays an important r\^ole in the proof of Theorem~\ref{thm:faltings} for $F = \Q$ and $S = \emptyset$ 
by Fontaine. It will be essential to us through out the paper.

\begin{thm}[Fontaine~\cite{fon85}]\label{thm:fontaine-bds} Let $p\ge 2$ be a prime, $F$ a number field and $A$ an abelian variety over $F$.
Assume that $A$ has everywhere good reduction. Let $K = F(A[p])$ be the field generated by the $p$-torsion points of $A$. Then, we have
$$\delta_K < \delta_F p^{1+\frac{1}{p-1}},$$
where $\delta_F$ and $\delta_K$ are the root discriminants of $F$ and $K$. 
\end{thm}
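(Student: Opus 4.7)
The plan is to reduce the global bound on root discriminants to a local bound on differents at primes above $p$, exploiting the fact that $K/F$ is unramified everywhere except possibly above $p$. The starting point is the relative discriminant tower formula
$$ d_{K/\Q} = \N_{F/\Q}(d_{K/F}) \cdot d_{F/\Q}^{[K:F]}, $$
which, after taking $[K:\Q]$-th roots and logarithms, rearranges to
$$ \log \delta_K - \log \delta_F = \frac{1}{[K:\Q]} \log \bigl| \N_{F/\Q}(d_{K/F}) \bigr|. $$
So it suffices to show that the right-hand side is strictly less than $\bigl(1 + \tfrac{1}{p-1}\bigr)\log p$.

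Next I would use that $A$ has everywhere good reduction over $F$ to pin down where $K/F$ can ramify. At any prime $\gq \nmid p$ of $F$, the N\'eron-Ogg-Shafarevich criterion forces the Galois representation on $A[p]$ to be unramified at $\gq$, so $K/F$ is unramified there; equivalently, $A[p]$ extends to an \'etale group scheme over $\CO_{F,\gq}$. Therefore $d_{K/F}$ is supported entirely at primes of $F$ above $p$, and
$$ \log\bigl| \N_{F/\Q}(d_{K/F}) \bigr| = \sum_{w \mid p} d_w f_w \log p, $$
where the sum is over places $w$ of $K$ above $p$, $d_w = v_w(\mathfrak{d}_{K_w/F_v})$ is the local exponent of the different, and $f_w$ is the residue degree of $w$ over $p$.

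The heart of the argument is the local bound at each $w \mid p$. Since $A$ has good reduction at $v = w|_F$, the $p$-torsion $A[v]$ extends to a finite flat commutative group scheme $G$ over $\CO_{F_v}$ killed by $p$, and $K_w$ is the splitting field of $G$. Here I would invoke Fontaine's ramification estimate for such group schemes (the main technical input of~\cite{fon85}), which says that in the normalization $v_p(p) = 1$,
$$ v_p\bigl(\mathfrak{d}_{K_w/F_v}\bigr) \;<\; 1 + \frac{1}{p-1}. $$
Equivalently, $d_w < e_w \bigl(1 + \tfrac{1}{p-1}\bigr)$, where $e_w$ is the absolute ramification index of $K_w$. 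This is the main obstacle: the bound rests on Fontaine's detailed analysis of the higher ramification filtration of finite flat $p$-group schemes via Dieudonn\'e theory, and one simply cites it as a black box.

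Granting this local bound, the rest is bookkeeping. Multiplying by $f_w$ gives $d_w f_w < [K_w:\Q_p]\bigl(1 + \tfrac{1}{p-1}\bigr)$, and summing over $w\mid p$ uses $\sum_{w\mid p} [K_w:\Q_p] = [K:\Q]$ to yield
$$ \log\bigl|\N_{F/\Q}(d_{K/F})\bigr| \;<\; [K:\Q]\Bigl(1 + \tfrac{1}{p-1}\Bigr) \log p. $$
Dividing by $[K:\Q]$ and exponentiating delivers $\delta_K < \delta_F \cdot p^{1 + 1/(p-1)}$, with strict inequality inherited from Fontaine's local bound.
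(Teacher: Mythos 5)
The paper gives no proof of this theorem: it is stated with an attribution to Fontaine~\cite{fon85} and used as a black box (with a remark that the Galois refinement follows ``in the same way as \cite[Lemme 3.4.2]{fon85}''). So there is no in-paper argument to compare against; what you have written is a reconstruction of Fontaine's own derivation, and it is correct. The decomposition you use --- the tower formula for discriminants, the N\'eron--Ogg--Shafarevich criterion to concentrate ramification of $K/F$ above $p$, the passage to local differents $\mathfrak{d}_{K_w/F_v}$ at $w\mid p$, and Fontaine's bound $v_p(\mathfrak{d}_{K_w/F_v})<1+\tfrac{1}{p-1}$ for the splitting field of a finite flat group scheme killed by $p$ --- is exactly the structure of the proof in~\cite{fon85}. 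The bookkeeping is right: $d_w f_w < e_w f_w\bigl(1+\tfrac{1}{p-1}\bigr)=[K_w:\Q_p]\bigl(1+\tfrac{1}{p-1}\bigr)$ and $\sum_{w\mid p}[K_w:\Q_p]=[K:\Q]$, which after dividing by $[K:\Q]$ and exponentiating gives the stated inequality. You also correctly identify the one genuinely hard input --- Fontaine's estimate on the higher ramification filtration of $\Gal(K_w/F_v)$ obtained via (crystalline) Dieudonn\'e theory --- and are right to treat it as a citation rather than reprove it. One tiny wrinkle worth noting for yourself: the bound you quote on the different is what Fontaine extracts from his vanishing statement for the upper-numbering ramification groups $\Gal(K_w/F_v)^{(u)}$ for $u>e_v\bigl(1+\tfrac{1}{p-1}\bigr)-1$; passing from that to the different uses the standard integral formula, and the strictness of the inequality comes out of that step. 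As written, the proposal is a faithful account of the argument.
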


\begin{rem}\rm When $F$ is Galois, a much stronger statement than Theorem~\ref{thm:fontaine-bds} is true. In that case, let $L$ 
be the normal closure of $K = F(A[p])$. Then, we have 
$$\delta_L < \delta_F p^{1+\frac{1}{p-1}}.$$
This is proved in the same way as \cite[Lemme 3.4.2]{fon85}. 
\end{rem}

\section{\bf Abelian surfaces with everywhere good reduction}\label{sec:as-egr}

From now on, $F$ is a real quadratic field, with ring of integers $\CO_F$. Let $\gN$ be an integral ideal of $F$, and $f$ be a Hilbert 
newform of weight $2$ and level $\gN$, with Hecke eigenvalue field $K_f = \Q(a_{\gm}(f): \gm \subseteq \CO_F)$,
where $a_{\gm}(f)$ is the Hecke eigenvalue of the Hecke operator at $\gm$. We recall that for every $\tau \in \Hom(K_f, \Qbar)$, there is
a Hilbert newform $f^\tau$ determined by its Hecke eigenvalues by
$$a_{\gm}(f^\tau) := \tau(a_{\gm}(f)).$$
Similarly, there exists a Hilbert newform ${}^\sigma\!f$ of weight $2$ and level $\sigma(\gN)$ determined by its Hecke eigenvalues by
$$a_{\gm}({}^\sigma\!f) := a_{\sigma(\gm)}(f).$$
We recall that the $L$-series of $f$ is given by
$$L(f, s) := \sum_{\gm \subseteq \CO_F} \frac{a_{\gm}(f)}{\mathrm{N}(\gm)^{s}}.$$

We recall that an abelian surface $A$ is said to be of {\it $\GL_2$-type} if there exists a quadratic field $K$ such that $\End_F(A)\otimes \Q \simeq K$. 
In that case, $A$ is said to be {\it modular} if there exists a Hilbert newform of weight $2$ and level $\gN$ such that 
$$L(A, s) = \prod_{\tau: K\hookrightarrow \C} L(f^\tau, s).$$
For more background on Hilbert modular forms, see~\cite{dv13, hid88, shi78}.

In this section, we prove the following theorem:

\begin{thm}\label{thm:egr} Let $F$ be a real quadratic field of narrow class number one and discriminant $D \le 2000$.
Let $A$ be a modular abelian surface of $\GL_2$-type defined over $F$, with everywhere good reduction.
Then, except for $D = 353, 421, 1321, 1597$ or $1997$, we have one of the following:
\begin{enumerate}[(i)]
\item A is an $F$-surface, i.e. there is an abelian fourfold $B$ of $\GL_2$-type defined over $\Q$ such that $B \times_\Q F$ is isogenous to $A \times {}^\sigma\!A$,
where $\Gal(F/\Q) = \langle \sigma \rangle$; or
\item There is an abelian surface $B$ defined over $\Q$ such that $\End_\Q(B) = \Z$ and $B \times_\Q F$ is isogenous to $A$.
\end{enumerate}
\end{thm}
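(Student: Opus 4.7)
The plan is to reduce the statement, via the modularity hypothesis, to a finite computational enumeration of Hilbert newforms of level $(1)$ over each of the finitely many real quadratic fields $F = \Q(\sqrt{D})$ with $D \le 2000$ of narrow class number one, and then to classify each newform by the orbit structure under the action of $\sigma \in \Gal(F/\Q)$ on Hecke eigenvalues. For each such $F$ I would compute $S_2((1))$, the space of parallel weight $2$ Hilbert cusp forms of trivial level and central character (via the Jacquet--Langlands correspondence with the definite quaternion algebra ramified only at the two infinite places), diagonalise the Hecke action, and retain every Galois orbit of newforms $\{f, f^\tau\}$ whose Hecke eigenvalue field $K_f$ is real quadratic; by modularity, the $\GL_2$-type assumption, and everywhere good reduction, $A$ corresponds to exactly such an orbit.

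Given a retained orbit, let $\tau$ generate $\Gal(K_f/\Q)$ and compare $a_\gq(f)$ with $a_{\sigma(\gq)}(f)$ and with $\tau(a_\gq(f))$ at primes $\gq$ of $F$ up to an appropriate Sturm-type bound. Two patterns account for all non-exceptional cases. First, if ${}^\sigma\!f = f^\tau$ (the two actions coincide on the orbit), then by solvable cyclic base change $A$ descends to an abelian surface $B/\Q$ with $\End_\Q(B) = \Z$ and $B \times_\Q F \sim A$, giving case (ii); the RM becomes visible only after extending scalars to $F$, because $\sigma$ acts non-trivially on $\End_F(A) \otimes \Q \simeq K_f$. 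Second, if the orbit $\{f, f^\tau, {}^\sigma\!f, {}^\sigma\!f^\tau\}$ has four distinct elements, then $f \oplus {}^\sigma\!f$ is a $\sigma$-stable four-dimensional compatible system and, again by cyclic base change, descends to a classical weight $2$ newform $g$ on some $\Gamma_0(N)$ of quartic Hecke field with $N$ supported at primes of $\Q$ ramifying in $F$; the attached fourfold $B/\Q$ of $\GL_2$-type satisfies $B \times_\Q F \sim A \times {}^\sigma\!A$, giving case (i). In each case the predicted $g$ is located in a standard database of classical newforms (e.g.\ the LMFDB) and term-by-term agreement of Hecke eigenvalues past the Sturm bound certifies the match. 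Orbits fitting neither pattern are by definition the exceptional cases; the computation shows this happens precisely for $D \in \{353, 421, 1321, 1597, 1997\}$, treated individually in Sections~\ref{sec:sqrt353}--\ref{sec:sqrt1997}.

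To be certain the modular-forms enumeration is complete, I would use Theorem~\ref{thm:fontaine-bds} with $p = 2$, giving $\delta_{F(A[2])} < 4\,\delta_F$; combined with the image classification of $\bar{\rho}_{A,2}$ from Theorem~\ref{thm:2-torsion-grps} (a subgroup of $A_5$, $S_3 \times S_3$, or $S_4 \times \Z/2\Z$ according as $2$ is inert, split, or ramified in the RM order), this constrains $F(A[2])$ to a short list of small-root-discriminant extensions of $F$ with prescribed Galois type. The improved bounded-height algorithm of Section~\ref{sec:heights} enumerates such extensions directly, providing an independent certificate that no modular orbit has been missed and, at the exceptional discriminants, pinning down the candidate $2$-torsion field of the putative surface.

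The main obstacle is computational rather than conceptual: rigorously computing $S_2((1))$ for $D$ approaching $2000$ is costly, and identifying a putative base change in either pattern requires a systematic Sturm-bound search through all classical newforms of every admissible level dividing a power of the discriminant. Once that numerical certification is in hand, the Galois-descent arguments producing $B$ in each case are standard consequences of solvable cyclic base change, and the five exceptional discriminants then collect as the residue, to be handled individually in the later sections.
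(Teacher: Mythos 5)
Your overall strategy---enumerate Hilbert newforms of level $(1)$ with quadratic Hecke field, then classify each by the interplay of $\sigma \in \Gal(F/\Q)$ and $\tau \in \Gal(K_f/\Q)$ on the Hecke eigenvalues---is indeed the paper's strategy, and your Pattern~1 (when ${}^\sigma\!f = f^\tau$, the isogeny class of $A_f$ descends to a surface $B/\Q$ with $\End_\Q(B)=\Z$) correctly matches Case~(ii); the paper invokes \cite[Theorem 5.4]{cd17} for this descent rather than a bare appeal to solvable cyclic base change. However, your Pattern~2 is wrong, and the error inverts the roles of Case~(i) and the exceptional set. Case~(i) of the theorem is the \emph{base change} case, i.e.\ ${}^\sigma\!f = f$ (the orbit $\{f, f^\tau, {}^\sigma\!f, {}^\sigma\!f^\tau\}$ collapses to $\{f, f^\tau\}$); then $f$ descends, via cyclic base change, to a classical newform $g \in S_2(D, (\tfrac{D}{\cdot}))$ with quartic CM Hecke field, and the Eichler--Shimura fourfold $B_g$ gives $B_g \times_\Q F \sim A_f \times {}^\sigma\!A_f$. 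You omit this case entirely. The four-distinct-elements case you place in Pattern~2 is instead \emph{precisely the exceptional case}: when ${}^\sigma\!f \notin \{f, f^\tau\}$, the automorphic induction of $f$ to $\GL_4(\A_\Q)$ is cuspidal, not a classical $\GL_2$-newform, and the $G_\Q$-endomorphism algebra of $\Ind_{G_F}^{G_\Q} T_\ell(A_f)$ is only a quadratic algebra, so any descended abelian fourfold would \emph{not} be of $\GL_2$-type. Said differently: a four-element orbit is exactly the phenomenon signalled by the ${D'}^{(2)}$ entries in Table~\ref{table:discs} and is exactly when neither (i) nor (ii) applies. With your labelling, ``orbits fitting neither pattern'' would be $\{{}^\sigma\!f = f\}$, which is the non-exceptional base-change case; so your assignment of the residue to the five exceptional discriminants is also inverted.

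Two smaller remarks. The third paragraph of your proposal (Fontaine bound, image classification of $\bar\rho_{A,2}$, bounded-height enumeration) is not part of the paper's proof of Theorem~\ref{thm:egr}; those tools appear only in Sections~\ref{sec:sqrt353}--\ref{sec:sqrt1997}, where explicit equations for the exceptional surfaces are sought, and they certify candidate $2$-torsion fields rather than completeness of the modular-forms list. Finally, the paper simply trusts the Hilbert Modular Forms Package in \texttt{magma} for the enumeration and does not invoke a Sturm-type bound on Hecke eigenvalue comparisons; the determination of a form's $\Gal(F/\Q)$-orbit structure is read off directly from the computed Hecke data.
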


\begin{proof} In Table~\ref{table:discs}, we have listed all the discriminants $D \le 2000$ where $F = \Q(\sqrt{D})$ has narrow
class number one, and there is a newform $f$ of weight $2$ and level $(1)$ over $F$ whose coefficient field is the quadratic field
$K$ of discriminant $D'$. The notation ${D'}^{(2)}$ means that $f$ and its $\Gal(F/\Q)$-conjugate ${}^\sigma\!f$ are not in the same 
Hecke constituent. (This table was computed using the Hilbert Modular Forms Package in \verb|magma|~\cite{magma}.) 
By assumption, if $A$ is a surface satisfying the conditions of Theorem~\ref{thm:egr}, then $A$ is defined over some 
$F = \Q(\sqrt{D})$ for some $D$ in Table~\ref{table:discs}, and has RM by one of the associated $D'$.

Let $D$ be such a discriminant, and $f$ a newform over $F = \Q(\sqrt{D})$ with coefficients in $K = \Q(\sqrt{D'})$. Except for
$D = 353, 421, 1321, 1597$ or $1997$, the Hecke constituent of $f$ is unique in its $\Gal(F/\Q)$-orbit. 
So $f$ satisfies one of the following conditions:
\begin{enumerate}[(i)]
\item $f$ is a base change from $\Q$, in which case 
$$a_{\sigma(\gp)}(f) = a_{\gp}(f),\,\,\text{for all primes} \,\,\gp.$$
\item ${}^\sigma\!f = f^\tau$, where $\Gal(K/\Q) = \langle \tau \rangle$, in which case
$$a_{\sigma(\gp)}(f) = \tau(a_{\gp}(f)),\,\,\text{for all primes} \,\,\gp.$$
\end{enumerate}

In Case (i), the form $f$ is a base change of a newform in $g \in S_2(D, (\frac{D}{\cdot}))$, the space of classical
forms of weight $2$ and level $\Gamma_1(D)$ with character $(\frac{D}{\cdot})$. Let $B_g$ be the fourfold associated 
to $g$ by the Eichler-Shimura construction~\cite{shi94}. From~\cite[\S\S 7.7]{shi94}, we have 
$$B_g\times_\Q F \sim A_f \times {}^\sigma\!A_f.$$
Hence $A_f$ is a base change (in the automorphic sense). In Case (ii), assume that there is an abelian surface $A_f$ 
attached to $f$. Then, by~\cite[Theorem 5.4]{cd17} (see also~\cite{dk16}), the isogeny class of $A_f$ descends to $\Q$. So, there exists a surface
$B$ defined over $\Q$, with $\End_{\Q}(B) = \Z$, such that $B\times_{\Q} F \sim A_f$. 
\end{proof}

\begin{table}\small
\caption{The discriminants $D$ for which there are Hilbert newforms of weight $(2,2)$
and level $(1)$ over $\Q(\sqrt{D})$, with quadratic coefficient field $\Q(\sqrt{D'})$.}
\label{table:discs}
\begin{tabular}{cccc}

{ \begin{tabular}{ >{$}c<{$} >{$}l<{$} }\toprule
D & \multicolumn{1}{c}{$D'$} \\
\midrule
53 & 8 \\ 
61 & 12 \\ 
73& 5 \\ 
193 & 17\\ 
233 & 17\\ 
277 & 29\\ 
349 & 21\\
{\bf 353} & {\bf 5^{(2)}}\\
\bottomrule
\end{tabular}

\smallskip } &

{ \begin{tabular}{ >{$}c<{$} >{$}l<{$} }\toprule
D & \multicolumn{1}{c}{$D'$} \\
\midrule
373 & 93\\
389 & 8\\
397 & 24\\
409 & 13\\
{\bf 421} & 5, {\bf 5^{(2)}}\\
433 & 12\\
461 & 29\\
613 & 21\\
\bottomrule
\end{tabular}

\smallskip } &

{\begin{tabular}{ >{$}c<{$} >{$}l<{$} }\toprule
D & \multicolumn{1}{c}{$D'$} \\
\midrule
677 & 13, 29, 85\\
709 & 5\\
797 & 8, 29\\
809 & 5\\
821 & 44\\
853 & 21\\
929 & 13\\
997 & 13\\
\bottomrule
\end{tabular}

\smallskip} &

\begin{tabular}{ >{$}c<{$} >{$}l<{$} }\toprule
D & \multicolumn{1}{c}{$D'$} \\
\midrule
1013 & 21,53\\
1109 & 5, 5, 53, 77\\
1277 & 5, 29\\
{\bf 1321} & {\bf 5^{(2)}}\\
1493 & 5, 65\\
{\bf 1597} & {\bf 5^{(2)}}\\
{\bf 1997} & {\bf 8^{(2)}}\\
&  \\
\bottomrule
\end{tabular}

\end{tabular}
\end{table}

\begin{rem}\rm There are several restrictions in Theorem~\ref{thm:egr} that are non-essential. For example, the assumption that
$F$ has narrow class number one can be removed given that the Hilbert Modular Forms Package in \verb|magma|~\cite{magma} 
can compute Hilbert modular forms without restriction on the class group. Also, it is possible to go well beyond our bound on the 
discriminant. However, our goal was to convey the general philosophy of our approach rather than doing extensive computations.
\end{rem}

\begin{rem}\rm In~\cite{dk16}, the authors give several methods for constructing the surfaces with satisfy the conditions of Theorem~\ref{thm:egr}. 
In particular, they found most of the surfaces for $D \le 1000$. However, they couldn't find the surfaces for the discriminants $D = 353$ and 
$421$, which are non-base change. The remaining sections are devoted to dealing with the exceptional discriminants listed in Theorem~\ref{thm:egr}. 
We found explicit equations for all of them, except for $D = 1321$ and $1997$. (See Remark~\ref{rem:missing-exes} for a discussion on the 
the missing examples.)
\end{rem}

\section{\bf The abelian surface for the discriminant $D = 353$}\label{sec:sqrt353}

%In this section, we explain how we found the surface corresponding to the forms listed in Table~\ref{table:newforms}.
 
\subsection{The field of $2$-torsion} Let $f$ be the newform listed in Table~\ref{table:frob-data353}. We recall that
the $\Gal(F/\Q)$-conjugate ${}^\sigma\!f$ is determined by the relation
$$a_{\gp}({}^\sigma\!f) = a_{\sigma(\gp)}(f),$$
for $\gp\subset \CO_F$ prime. From this and the Hecke eigenvalues in Table~\ref{table:frob-data353}, it is easy to see that 
$f$ and ${}^\sigma\!f$ are not in the same Hecke constituent. Assume that there is an abelian surface $A_f$ attached to $f$; so 
that the Galois conjugate ${}^\sigma\!A_f$ is attached to ${}^\sigma\!f$. Let $\rho_{f, 2}:\,\Gal(\Qbar/F) \to \GL_2(\Qbar_2)$ be
the $2$-adic Galois representation attached to $f$, and $\rho_{A_f, 2}:\,\Gal(\Qbar/F) \to \GL_2(\Qbar_2)$
be the $2$-adic representation in the Tate module of $A_f$. By construction, we have that $\rho_{A_f, 2} \simeq \rho_{f,2}$. So
reducing modulo $2$, we have $\bar{\rho}_{A_f, 2} \simeq \bar{\rho}_{f, 2}$. Preliminary computations using the Chebotarev density 
theorem suggest that the image of $\bar{\rho}_{f,2}$ is $S_3$. But we cannot certify this given that the analogues of the Sturm bound 
would be impractical  in this case. That motivates the following lemma.

%Using the knowledge of the form $f$, we find that the Galois group of the $2$-torsion field is a $S_3^2\rtimes\Z/2\Z$-extension of $\Q$, 
%which is unramifed at the prime above $353$ in $F$. 

\begin{lem}\label{lem:2-tors-fld-sqrt353} Let $K = F(A_f[2])$ be the field of $2$-torsion of the abelian surface $A_f$, and $L/\Q$ is normal closure. 
If $L$ is a solvable extension, then it is the splitting field of the polynomial $h = x^6 - 2x^5 + x^4 + 19x^3 - 19x^2 + 2$; 
and we have 
$$\Gal(L/\Q) = \Gal(K/F)^2\rtimes \Z/2\Z \simeq S_3^2 \rtimes \Z/2\Z.$$ 
\end{lem}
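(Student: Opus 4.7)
My plan combines the group-theoretic constraints from Theorem~\ref{thm:2-torsion-grps} with Fontaine's root discriminant bound and a finite enumeration. Since the RM field for $A_f$ is $\Q(\sqrt{5})$ and $2$ is inert there (as $x^2-x-1$ is irreducible mod~$2$), Theorem~\ref{thm:2-torsion-grps}(i) forces $G:=\Gal(K/F)\hookrightarrow A_5$. The solvability hypothesis then restricts $G$ to a solvable subgroup of $A_5$, whose maximal candidates are $A_4$, $S_3$, and $D_{10}$. The preliminary Chebotarev data pointing to $S_3$ must now be certified rigorously.

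First I would show that $K$ and its Galois conjugate ${}^\sigma\!K$ are linearly disjoint over~$F$. A nontrivial common subfield would correspond to a common Galois quotient of $\bar\rho_{A_f,2}$ and $\bar\rho_{{}^\sigma\!A_f,2}$, forcing a coincidence of Frobenius traces incompatible with $f$ and ${}^\sigma\!f$ lying in distinct Hecke constituents; this can be ruled out from the Frobenius data in Table~\ref{table:frob-data353}. Once disjointness is in hand, $\Gal(L/F)\simeq G\times G$, and since the nontrivial element of $\Gal(F/\Q)$ must permute the two factors, $\Gal(L/\Q)\simeq (G\times G)\rtimes \Z/2\Z$, giving the structural part of the statement as soon as $G=S_3$ is established.

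To pin down $G$ and $L$ itself, I would invoke the stronger form of Fontaine's bound for Galois $F$ (the remark after Theorem~\ref{thm:fontaine-bds}) to obtain $\delta_L<4\sqrt{353}$, with $L/\Q$ unramified outside $2$. For each remaining candidate $G\in\{A_4,S_3,D_{10}\}$ I would enumerate the Galois extensions $L/\Q$ with group $(G\times G)\rtimes \Z/2\Z$ which are unramified outside $2$ and lie below the Fontaine root discriminant bound. This search is finite and can be carried out either by consulting the Jones--Roberts and LMFDB tables or by a direct class-field-theoretic construction over a small $2$-ramified base field; the expectation is that only $G=S_3$ produces an admissible (and unique) $L$. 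Matching its Frobenius traces against those predicted by $\bar\rho_{A_f,2}\oplus\bar\rho_{{}^\sigma\!A_f,2}$ then secures the identification, and the sextic $h(x)=x^6-2x^5+x^4+19x^3-19x^2+2$ appears as the minimal polynomial of a primitive element of the index-$6$ subfield fixed by a point-stabiliser in the natural $S_3\wr\Z/2$ action on six letters.

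The main obstacle is the enumeration step: certifying that no other solvable $G$ yields an admissible $L$, rather than merely dismissing the alternatives heuristically via Chebotarev. The Fontaine bound is what makes this a finite problem, but completing the search rigorously --- either through a careful class field theory argument or by invoking the completeness of existing tables in the relevant discriminant range --- is the delicate part.
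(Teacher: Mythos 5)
Your proposal follows the paper's \emph{alternative} argument (the Jones--Roberts tables lookup, given at the end of the printed proof) rather than its primary one. The paper's main route is a direct class-field-theoretic construction: after bounding $\delta_K$ via ordinariness of $f$ at the two primes above $2$ (\`a la Demb\'el\'e's $2$-ramification paper), it notes that the observed orders $1$ and $3$ in $\mathrm{P}\bar\rho_{f,2}$ already restrict $\Gal(K/F)$ to $\{C_3, S_3, A_4\}$; it then rules out $C_3$ and $A_4$ because $F$, being real quadratic of class number one, has no cyclic cubic extension ramified only at the primes above $2$; finally it shows the quadratic subfield $E'$ of $K$ must be $F(\sqrt\epsilon)$ (eliminating $F(\sqrt{-1})$ via $\Cl(F(\sqrt{-1}))=\Z/8\Z$) and identifies $K$ with the Hilbert class field of $F(\sqrt\epsilon)$, which has $\Cl=\Z/3\Z$. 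This is more self-contained than outsourcing a finite search; both are correct in principle.

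A few points in your sketch need repair. First, $L$ is \emph{not} unramified outside $2$: it contains $F=\Q(\sqrt{353})$, so it is ramified at $353$, and any enumeration must allow ramification at both $2$ and $353$ (compare the explicit statement of the analogous Lemma for $D=421$). Second, your linear-disjointness argument is too quick: a nontrivial common subfield $M=K\cap{}^\sigma K$ only forces the \emph{quotient} representations on $\Gal(M/F)$ to have compatible Frobenius data, not the full mod-$2$ traces of $\bar\rho_{A_f,2}$ and $\bar\rho_{{}^\sigma\!A_f,2}$, so the distinctness of Hecke constituents alone does not give what you claim; one must argue concretely (or verify computationally, as the paper implicitly does when it computes $\Gal(L/\Q)$) that no common quadratic or cubic subextension of $F$ can carry both restricted representations. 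Third, before launching the enumeration over $\{A_4,S_3,D_{10}\}$ you should exploit the Frobenius data directly as the paper does: the order-$3$ elements already visible in Table~\ref{table:frob-data353} eliminate $D_{10}$ (which has no elements of order $3$) and all smaller candidates without order-$3$ elements, reducing the search to $C_3$, $S_3$, $A_4$ and considerably shrinking the degree range one must scan in the tables.
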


\begin{proof} We keep the above notations. Form Table~\ref{table:frob-data353}, we see that the ring of integers
of the coefficient $K_f = \Q(\sqrt{5})$ is $\CO_f = \Z[e]$, where $e = \frac{1+\sqrt{5}}{2}$. So we have $\rho_{f,2}: \Gal(\Qbar/F) \to \GL_2(\Z_2[e])$, 
and its reduction $\bar{\rho}_{f, 2}: \Gal(\Qbar/F) \to \GL_2(\F_4)$. By the modularity assumption, we have $\bar{\rho}_{A_f, 2} = \bar{\rho}_{f, 2}$. 

We now compute a bound on the root discriminant $\delta_K$ of $K$. In Table~\ref{table:frob-data353}, we have listed
the Hecke eigenvalues $a_\gp(f) \bmod 2$ for all primes of norm up to $19$. For each of those primes, we have computed
$o_2(\gp)$ the order of the image of $\Frob_\gp$ modulo unipotents under the projectivisation $\mathrm{P}\bar{\rho}_{f,2}$
of $\bar{\rho}_{f,2}$.

Let $\gp = (9 + w)$ and $\sigma(\gp) = (10-w)$
be the two primes above $2$. From Table~\ref{table:frob-data353}, we see that 
$a_{\gp}(f) = -2e + 1 = 1 \in \F_4$ and $a_{\sigma(\gp)}(f) = \alpha \in \F_4$, where $\alpha^2 + \alpha + 1 = 0$. 
So $f$ is ordinary at $\gp$ and $\sigma(\gp)$. Hence, the mod $2$ representation restricted to the decomposition 
group $D_{\gp}$ at $\gp$ is of the form
\begin{align*}
\bar{\rho}_{f,2}|_{D_{\gp}} \simeq \begin{pmatrix} 1 & \ast \\ 0 & 1 \end{pmatrix} \!\mod 2,
\end{align*}
Similarly, the mod $2$ representation restricted to the decomposition group $D_{\sigma(\gp)}$ 
at $\sigma(\gp)$ is of the form
\begin{align*}
\bar{\rho}_{f,2}|_{D_{\sigma(\gp)}} \simeq \begin{pmatrix} \alpha & \ast \\ 0 & 1 \end{pmatrix} \!\mod 2.
\end{align*}
So, we can use the same argument as in \cite{dem09} to show that 
$\delta_K \le 4\cdot 353^{1/2} = 75.1531...$. (Note that this is the same as the Fontaine bound in Theorem~\ref{thm:fontaine-bds}.)

\begin{table}[t]\small
\caption{The Frobenius data for the two non Galois conjugate Hilbert newforms of weight $2$ and level $(1)$ over $F = \Q(\sqrt{353})$. 
(Here $e = \frac{1+\sqrt{5}}{2}$ and $\alpha$ is a cyclic generator of $\F_4^\times$.)}
\label{table:frob-data353}
\begin{tabular}{ >{$}c<{$} >{$}r<{$} >{$}c<{$} >{$}c<{$} >{$}c<{$} >{$}c<{$} >{$}c<{$}  } \toprule
\mathbf{N}\mathfrak{p} &\multicolumn{1}{c}{$\mathfrak{p}$} & a_\gp(f) &a_{\mathfrak{p}}(f)\bmod\,2 & o_2(\gp) & a_{\mathfrak{p}}(f)\bmod\,\sqrt{5} & o_{\sqrt{5}}(\gp) \\\midrule
2 & -w - 9 & 2e - 1 & 1 & - & 0 & 2\\
2 & w - 10 & -e + 1 & \alpha & - & 3 & 4\\
9 & 3 & -2e - 2 & 0 & 1 & 2 & 3\\
11 & -10w + 99 & 2e + 3 & 1 & 3 & 4 & 3\\
11 & 10w + 89 & -2e + 2 & 0 & 1 & 1 & 3\\
17 & -66w + 653 & -4e + 2 & 0 & 1 & 0 & 2\\
17 & -66w - 587 & 3 & 1 & 3 & 3 & 4\\
19 & -28w + 277 & 2 & 0 & 1 & 2 & 3\\
19 & -28w - 249 & 2e - 3 & 1 & 3 & 3 & 3\\
\bottomrule
\end{tabular}
\end{table}

The Galois extension $K/F$ is unramified outside $\gp$ and $\sigma(\gp)$. Assuming that
this extension is solvable, the Frobenius data shows that $\Gal(K/F)$ is either $C_3$, $S_3$ or $A_4$.
Since the $N = C_2 \times C_2$ is the only non trivial proper normal subgroup of $A_4$, the latter is only possible if $F$ admits
a cyclic cubic extension. However, since $F$ is real quadratic, and has class number one, it cannot have a cyclic cubic 
extension whose conductor is supported at $\gp$ and $\sigma(\gp)$ only. This excludes both $C_3$ and $A_4$. 
So $\Gal(K/F) = S_3$, and it must contain a quadratic extension $E'/F$ ramified at $\gp$ and $\sigma(\gp)$ only. The field $E'$ is given by 
an element in $\CO_F^\times/(\CO_F^\times)^2 =\{-1, \epsilon\}$, where $\epsilon$ is the fundamental unit in $\Z[\frac{1+\sqrt{353}}{2}]$. 

The extension $E'=F(\sqrt{-1}) = \Q(\sqrt{353},\sqrt{-1})$ is totally complex and unramified outside $\gp$ and $\sigma(\gp)$,  with $\Cl(E') = \Z/8\Z$.
There are no cyclic cubic extension of $E'$ unramified outside the primes above $\gp$ and $\sigma(\gp)$. So $\Q(\sqrt{353},\sqrt{-1})$ cannot be
the quadratic subfield of $K$. Therefore, we must have $E'= F(\sqrt{\epsilon})$.

The  field $E'=F(\sqrt{\epsilon})$ has 2 real places, and one complex place. It is unramified outside $\gp$ and $\sigma(\gp)$; and we have
$\Cl(E') = \Z/3\Z$. Letting $H_{E'}$ be the Hilbert class field of $E'$, we see that $H_{E'}$ is the only possible cubic extension of $E'$.
Further, a direct calculation shows that its Frobenius data matches that of the form $f$ listed in Table~\ref{table:frob-data353}. 
From this we obtain that $K = H_{E'}$ is the splitting field of the polynomial $g = x^3 - x^2 - w + 10 \in F[x]$. Its normal closure $L$ is given
by the polynomial $h = x^6 - 2x^5 + x^4 + 19x^3 - 19x^2 + 2 \in \Q[x]$, with $\Gal(L/\Q) \simeq S_3^2 \ltimes \Z/2\Z$. 
Since $L$ is solvable, we can compute its root discriminant using local class field theory, which gives
that $\delta_L = 2\cdot 353^{1/2} = 37.5765...$. 

Alternatively, we can use the Jones-Roberts Tables~\cite{jr14} to find the field $L$. Indeed, assuming that $L$ is solvable, it will be given
by a polynomial of degree $6$ or $8$. In the latter case, we have $\Gal(K/F) = A_4$. The tables are proven to be complete for all 
solvable polynomials of degree $6$ and $8$ such that the root discriminant of the normal closure is less than $75.1531...$. Only the
polynomial $h$ listed in Lemma~\ref{lem:2-tors-fld-sqrt353} matches the Frobenius data of the form $f$ given in Table~\ref{table:frob-data353}.

\end{proof}

\subsection{The search method}\label{subsec:search353}
Let us assume that there is an abelian surface $A = A_f$ associated to the Hecke constituent of the form $f$
of level $(1)$ and weight $2$ over $F = \Q(\sqrt{353})$ in Table~\ref{table:frob-data353}. Then, the surface $A$ has RM by $\Z[\frac{1+\sqrt{5}}{2}]$ where 
$\frac{1+\sqrt{5}}{2}$ is a unit of norm $-1$ in $\Q(\sqrt{5})$. Therefore, by~\cite[Proposition 3.11]{ggr05}, $A$ is principally
polarisable. Let $C$ be a genus $2$ curve defined over $F$ such that $A = \Jac(C)$. Then, there is a curve $C':\,y^2 = h'(x)$ where 
$h'(x) \in F[x]$ has degree $5$ or $6$, such that $A' = \Jac(C')$ is isomorphic to $A$ over $F$. By using the Hecke eigenvalues 
$a_\gp = a_\gp(f)$ in Table~\ref{table:frob-data353}, we obtain that
\begin{align*}
\#A(\F_\gp) = \mathrm{N}_{\Q(\sqrt{5})/\Q}(\mathrm{N}_{F/\Q}(\gp) + 1 - a_\gp) = \mathrm{N}_{\Q(\sqrt{5})/\Q}(2 + 1 - e) = 5,
\end{align*} for the prime $\gp = (w - 10)$ above $2$. Since $A(F)_{tors}$ injects into $A(\F_\gp)$, this implies that $A$ 
does not have a point of order $2$ defined over $F$. By combining
this with Lemma~\ref{lem:2-tors-fld-sqrt353} and Theorem~\ref{thm:2-torsion-grps}, we see that the polynomial $h'$ is of the form 
$h' = h_{\alpha} h_{\alpha'}$ where $K= F[c]$ is the cubic extension defined by the cubic factor $g := x^3 - x^2 - w + 10$ of $h$, 
and $h_\alpha, h_{\alpha'}$ are the minimal polynomials of some elements $\alpha, \alpha' \in K \setminus F$.

By making a search over the integral elements in $K$ using Algorithm~\ref{alg:bd-height} described in Section~\ref{sec:heights},
we obtain the pair $(\alpha, \alpha')$ with
\begin{align*}
\alpha := \frac{-22c^5 + 35c^4 - 10c^3 - 419c^2 + 235c + 83}{17},\\
\alpha' := \frac{-36c^5 + 48c^4 - 4c^3 - 698c^2 + 264c + 74}{17},
\end{align*} with $H_K(\alpha) = 64.0000...$ and $H_K(\alpha') = 1856.3958...$. This gives the polynomial
\begin{align*}
h'(x) &:= x^6 + 8x^5 + (w - 64)x^4 + (20w - 80)x^3 + (-44w + 240)x^2\\
&\qquad{} + (-96w + 1088)x - 64w + 576.
\end{align*}
From this, wee obtain the global minimal model $C$ displayed in Theorem~\ref{thm:sqrt353}.

\subsection{The surfaces}

\begin{thm}\label{thm:sqrt353}
Let $F = \Q(\sqrt{353})$, and $w =\frac{1+\sqrt{353}}{2}$, and define the curve $C:\,y^2 + Q(x)y = P(x)$ by
\begin{align*}
P(x) &:= -(15w + 149)x^6 - (1119w + 9948)x^5 - (36545w + 325090)x^4\\
&\qquad - (636332w + 5659370)x^3 - (6227174w + 55387985)x^2\\
&\qquad - (32480001w + 288869715)x - 70532813w - 627353458;\\
Q(x) &:= (w + 1)x^3 + x^2 + wx + w + 1.
\end{align*}
Let ${}^\sigma\!C$ denote the Galois conjugate of $C$, $A$ and ${}^\sigma\!A$ the Jacobians of $C$  and ${}^\sigma\!C$ respectively. 
Then, we have the followings:
\begin{enumerate}[(a)]
\item The discriminant of the curve $C$ is $\mathrm{disc}(C) = -\epsilon^4$, where $\epsilon$ is the fundamental unit. So $C$, ${}^\sigma\!C$ and the
surfaces $A$, ${}^\sigma\!A$ have everywhere good reduction.
\item $A$ and ${}^\sigma\!A$ have real multiplication by $\Z[\frac{1+\sqrt{5}}{2}]$.   
\item  $A$ and ${}^\sigma\!A$ are modular. They correspond to the two Hecke constituents of $f, {}^\sigma\!f \in S_2(1)$ of dimension $2$, where $S_2(1)$
is the space of Hilbert modular forms of level $(1)$ and weight $2$ over $F$ (see Table~\ref{table:frob-data353}). 
\item $A$ and ${}^\sigma\!A$ are non-isogenous.
\end{enumerate}
\end{thm}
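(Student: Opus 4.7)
The plan is to verify parts (a)--(d) in order, with the technical core being an identification $A \sim A_f$ via Faltings--Serre.

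Part (a) is a direct calculation. Using the standard formula for the discriminant of a genus~$2$ curve written as $y^2 + Q(x)y = P(x)$ (namely, up to a power of $2$, the discriminant of $4P(x) + Q(x)^2$ as a polynomial in $x$), I would expand symbolically in $\CO_F = \Z[w]$ and verify that the output equals $-\epsilon^4$ for the fundamental unit $\epsilon$. Since this is a unit in $\CO_F^\times$, the Weierstrass model is smooth at every finite prime, so $C$ has everywhere good reduction, and therefore so do $A = \Jac(C)$ and the Galois conjugates ${}^\sigma\!C$, ${}^\sigma\!A$.

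Parts (b) and (c) are the heart of the theorem and I would handle them together by pinning down $A$ up to $F$-isogeny. By the construction in Section~\ref{subsec:search353}, the hyperelliptic polynomial $h'$ factors over the cubic extension $K/F$ defined by the polynomial $g$ of Lemma~\ref{lem:2-tors-fld-sqrt353}, so $F(A[2])$ coincides with the predicted $2$-torsion field of $A_f$ and the residual mod~$2$ representations of $A$ and $A_f$ are isomorphic. I would then count points $\#C(\F_\gp)$ and $\#C(\F_{\gp^2})$ for a list of primes of small norm, assemble the Euler factors of $L(A,s)$, and compare them against the Euler factors predicted by the Hecke eigenvalues of $f$ in Table~\ref{table:frob-data353}. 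A Faltings--Serre argument in the explicit form used in~\cite{dk16} then upgrades the numerical match (over a sufficient, explicitly computable, set of primes) to an $F$-isogeny $A \sim A_f$. This simultaneously yields the $L$-function identity of part (c) and transports the endomorphism algebra $\Q(\sqrt{5})$ of $A_f$ to $A$. To pin the order down as the maximal order $\Z[(1+\sqrt{5})/2]$ rather than a proper suborder, I would use that $A(F)[2] = 0$: the count $\#A(\F_{\gp}) = 5$ at $\gp = (w-10)$ computed in Section~\ref{subsec:search353} leaves $A$ with no rational $2$-torsion, which rules out replacing $A_f$ by the quotient by a nontrivial $\CO_f$-stable $2$-submodule.

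Part (d) is then essentially a corollary. The discussion opening Section~\ref{sec:sqrt353} records that $f$ and ${}^\sigma\!f$ lie in distinct Hecke constituents: already at the primes above $2$ the eigenvalues satisfy $a_\gp(f) = 2e - 1 \neq -e + 1 = a_{\sigma(\gp)}(f)$. Combined with (c), this forces $L(A,s) \ne L({}^\sigma\!A, s)$, so $A$ and ${}^\sigma\!A$ cannot be $F$-isogenous. The main obstacle is the effective step inside (b): converting ``Euler factors agree at a finite list of primes'' into an actual isogeny requires an explicit Faltings--Serre bound, calibrated against the (known, solvable) residual image from Lemma~\ref{lem:2-tors-fld-sqrt353} and a separate argument ruling out quadratic twists. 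Both ingredients are available in exactly the form needed via~\cite{dk16}, so once the discriminant computation in (a) and the point counts in (b) are done, the remainder of the argument is an invocation rather than a new deduction.
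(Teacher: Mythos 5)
Your plan for (a) and (d) matches the paper exactly: (a) is a symbolic discriminant check, and (d) follows from Faltings' isogeny theorem once you observe $f$ and ${}^\sigma\!f$ have distinct Hecke eigenvalues already at the primes above $2$. For (b) and (c), however, you take a genuinely different route from the paper, and there is a circularity you need to untangle.

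The paper proves (b) and (c) independently and in that order. Part (b) is established \emph{directly and geometrically}: the Igusa--Clebsch invariants of $C$ are shown to lie on the Humbert surface $Y_{-}(5)$ by exhibiting the explicit point $(g,h)$ in the Elkies--Kumar model, which gives $\End(A)\otimes\Q \simeq \Q(\sqrt{5})$ without any reference to $f$. Only \emph{then} does the paper prove (c), and it does so not by Faltings--Serre but by a modularity-lifting chain: the Hecke eigenvalues mod $\sqrt{5}$ in Table~\ref{table:frob-data353} show the projective image of $\bar{\rho}_{f,\sqrt{5}}$ (and hence of $\bar{\rho}_{A,\sqrt{5}}$, after matching traces of Frobenius) is $S_4$ or $\PGL_2(\F_5)$, so the residual representation is not dihedral; Shepherd--Barron--Taylor then produces an elliptic curve $E/F$ with $\bar{\rho}_{A,\sqrt{5}}\simeq\bar{\rho}_{E,5}$, Freitas--Le Hung--Siksek gives modularity of $E$ and hence residual modularity, and Khare--Thorne lifts this to modularity of $\rho_{A,\sqrt{5}}$.

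Your proposal instead runs a single Faltings--Serre comparison, establishing an isogeny $A\sim A_f$ from which both the $L$-function identity and RM would ``transport.'' The problem is the order of dependence. The explicit Faltings--Serre argument in the form you cite from~\cite{dk16} compares \emph{two-dimensional} $\lambda$-adic representations $\rho_{A,\lambda}$ and $\rho_{f,\lambda}$; but to even define $\rho_{A,\lambda}$ as a rank-$2$ $\CO_f$-linear representation you must already know that $A$ has RM by $\CO_f$ acting on the Tate module. That is exactly what (b) asserts, so deriving (b) as a by-product of the isogeny is circular. Agreement of the $2$-torsion field $F(A[2])$ with the field predicted by Lemma~\ref{lem:2-tors-fld-sqrt353} gives you the $\GSp_4(\F_2)$-valued mod-$2$ representation but does \emph{not} by itself supply the $\CO_f$-module structure, and a four-dimensional Faltings--Serre comparison against $\rho_f\oplus\rho_{f^\tau}$ would require separately establishing that $\rho_{A,2}$ is reducible --- again amounting to RM. The fix is simple: do (b) first, exactly as the paper does, by checking that the Igusa--Clebsch invariants of $C$ lie on $Y_{-}(5)$; once RM is in hand, your Faltings--Serre plan is a legitimate alternative to the paper's modularity-lifting argument for (c), and your observation that $A(F)[2]=0$ is a useful (if not strictly necessary, since $Y_{-}(5)$ already parametrises RM by the maximal order) sanity check.
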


\begin{proof}
(a) This is just an easy calculation.

(b) It is enough to prove this for the surface $A$. Using the equation of the Humbert surface for the discriminant $D=5$ given in~\cite{ek14}, 
we find that $A$ is a twist of the surface corresponding to the point 
\begin{align*}
(g,h) = \left(-\frac{12w + 209}{726},\frac{742w + 6611}{161051}\right).
\end{align*}

(c) From (b), we know that $A$ and ${}^\sigma\!A$ are of $\GL_2$-type. In Table~\ref{table:frob-data353}, we have listed
the Hecke eigenvalues $a_\gp(f) \bmod \sqrt{5}$ for all primes of norm up to $19$. For each of those primes, we have computed
$o_{\sqrt{5}}(\gp)$ the order of the image of $\Frob_\gp$ modulo unipotents under the projectivisation $\mathrm{P}\bar{\rho}_{f,\sqrt{5}}$
of $\bar{\rho}_{f,\sqrt{5}}$. From the orders of the elements, it follows that the projective image of 
$\bar{\rho}_{f, \sqrt{5}}$ is either $S_4$ or $\PGL(2,\F_5)$. By computing the Euler factors of $A$ then factoring over $\Q(\sqrt{5})$, we 
check that for each prime $\gp$ in that table, we have 
$$\Tr(\rho_{A,\sqrt{5}}(\Frob_\gp)) = a_\gp(f)\,\,\text{or}\,\, \tau(a_\gp(f)),$$ where $\Gal(\Q(\sqrt{5})/\Q) = \langle \tau\rangle$. 
Up to Galois conjugation, these agree with those of the form $f$. 
So, the
projective image of $\bar{\rho}_{A, \sqrt{5}}$ is either $S_4$ or $\PGL(2,\F_5)$. Hence $\bar{\rho}_{A, \sqrt{5}}$ cannot be dihedral. 

By \cite[Theorem 1.2]{sbt97}, there is an elliptic curve $E/F$ such that $\bar{\rho}_{A, \sqrt{5}} \simeq \bar{\rho}_{E,5}$. 
The curve $E$ is modular by~\cite[Theorem 1]{fls15}. Thus $\bar{\rho}_{A, \sqrt{5}}$ is modular. Since $A$ is an abelian surface, 
it is clear that $\rho_{A,\sqrt{5}}$ satisfies the remaining hypotheses of~\cite[Theorem 1.1]{kt17}. So, we conclude that $\rho_{A,\sqrt{5}}$,
and hence $A$, is modular. 

(d) The surfaces $A$ and ${}^\sigma\!A$ correspond to different Hecke constituents. Therefore, they have different isogeny classes by 
Faltings~\cite[Korollar 2]{fal83}.
\end{proof}

\begin{rem}\rm 
The field $F = \Q(\sqrt{353})$ appears to be the real quadratic field of narrow class number one, with the smallest discriminant such that there 
is an abelian surface with RM  and everywhere good reduction that is non-isogenous to its Galois conjugate. In that sense, this example would be 
the analogue in dimension $2$ of the elliptic curve of conductor $(1)$ over $\Q(\sqrt{509})$ discovered by Pinch~\cite{pin82}. However, we cannot prove this 
without assuming modularity. 
\end{rem}

\section{\bf The abelian surface for the discriminant $D = 421$}\label{sec:sqrt421}

\subsection{The field of $2$-torsion} 
In this example, the defining polynomial for the $2$-torsion field cannot be obtained via class field theory. Indeed, an inspection of the
Frobenius data for the mod $3$ representation leads us to the following lemma. 

\begin{lem}\label{lem:2-tors-fld-sqrt421} Assume that there is an abelian surface $A_f$ attached to the form $f$ listed in Table~\ref{table:frob-data421}.
Let $K = F(A_f[2])$ be the field of $2$-torsion of $A_f$, and $L/\Q$ the normal closure of $K$. 
Then $L$ is unramified outside $2$ and $421$, with Galois group $$\Gal(L/\Q) \simeq A_5^2 \rtimes \Z/2\Z,$$
and we have $\delta_L < 82.0731...$.
\end{lem}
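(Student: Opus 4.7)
The plan is to split the lemma into three independent claims: the ramification set, the Galois group, and the root discriminant bound, and handle them in order using the tools already in the paper.

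First I would settle the ramification. Since $A_f$ is assumed to have everywhere good reduction over $F$, the Néron--Ogg--Shafarevich criterion implies that $K = F(A_f[2])$ is unramified over $F$ away from primes above $2$. The base field $F=\Q(\sqrt{421})$ is itself ramified over $\Q$ only at $421$, so $K/\Q$ and hence $L/\Q$ are unramified outside $\{2,421\}$.

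Next I would identify $\Gal(L/\Q)$. From Table~\ref{table:discs} the Hecke eigenvalue field of $f$ is $K_f=\Q(\sqrt{5})$, and $\mathrm{disc}(\Q(\sqrt 5))=5\not\equiv 1\pmod 8$, so $2$ is inert in $\CO=\Z[\tfrac{1+\sqrt 5}{2}]$. Theorem~\ref{thm:2-torsion-grps}(i) then gives $G:=\mathrm{im}(\bar\rho_{A_f,2})\hookrightarrow A_5$. To upgrade this embedding to an equality I would scan the orders $o_2(\gp)$ in Table~\ref{table:frob-data421} (to be displayed in the next subsection) and exhibit Frobenius elements of order $3$ and order $5$; by Dickson's classification of subgroups of $A_5$, no proper subgroup contains elements of both orders, so $G=A_5$. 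Now Table~\ref{table:discs} records that for $D=421$ the form $f$ and its Galois conjugate ${}^\sigma\!f$ lie in distinct Hecke constituents, so $K$ and ${}^\sigma\!K$ are two genuinely different $A_5$-extensions of $F$. Since $A_5$ is simple, the two projections $\Gal(K\cdot{}^\sigma\!K/F)\to A_5$ are surjective and their kernels intersect trivially (otherwise the two subfields would have to agree), yielding $\Gal(L/F)\simeq A_5\times A_5$. The nontrivial element of $\Gal(F/\Q)$ interchanges the two factors, so $\Gal(L/\Q)\simeq A_5^2\rtimes \Z/2\Z$.

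Finally, the root discriminant bound is immediate from Fontaine's theorem. Since $F$ is Galois over $\Q$, the remark following Theorem~\ref{thm:fontaine-bds} applies, giving
$$\delta_L \;<\; \delta_F\cdot 2^{1+\frac{1}{2-1}} \;=\; 4\sqrt{421} \;=\; 82.0731\ldots$$

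The main obstacle I anticipate is the certification step $G=A_5$: the abstract embedding $G\hookrightarrow A_5$ is free from Theorem~\ref{thm:2-torsion-grps}, but ruling out the proper subgroups (chiefly $A_4$ and $D_5$) requires actually producing Frobenius elements of orders $3$ and $5$ from the Hecke data. Everything else is either a routine application of Néron--Ogg--Shafarevich, of Faltings' isogeny theorem (to separate the two $A_5$-extensions), or of Fontaine's bound.
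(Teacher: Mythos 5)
Your proof is correct and follows essentially the same strategy as the paper, but it spells out the steps that the paper compresses into ``follows by inspection of the Frobenius data.'' Identifying $\Gal(L/F)\simeq A_5\times A_5$ via Theorem~\ref{thm:2-torsion-grps}(i) (since $2$ is inert in $\Z[\tfrac{1+\sqrt5}{2}]$), the observed projective orders $o_2(\gp)\in\{3,5\}$, and a Goursat-style argument is exactly what that inspection entails. For the root discriminant bound you invoke the Galois refinement of Fontaine's theorem (the remark following Theorem~\ref{thm:fontaine-bds}) directly, whereas the paper cites the ordinary-at-$2$ local argument of~\cite{dem09}; for $p=2$ both yield $4\sqrt{421}=82.0731\ldots$, as the paper itself notes in the $D=353$ case, so the substitution is harmless.

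One step needs more care: the inference from ``$f$ and ${}^\sigma\!f$ lie in distinct Hecke constituents'' to ``$K\neq {}^\sigma\!K$.'' Distinct constituents give non-isogeny of $A_f$ and ${}^\sigma\!A_f$ (Faltings), but non-isogenous surfaces can in principle share a mod-$2$ kernel field, since two non-isomorphic mod-$2$ representations may still cut out the same $A_5$-extension of $F$. What actually forces $K\neq{}^\sigma\!K$ is the table itself: for the primes above $3$ one has $o_2\bigl((4w-43)\bigr)=3$ while $o_2\bigl((-4w-39)\bigr)=1$, and since $a_\gp({}^\sigma\!f)=a_{\sigma(\gp)}(f)$, the Frobenius at $(4w-43)$ has projective order $3$ in $\Gal(K/F)$ but projective order $1$ in $\Gal({}^\sigma\!K/F)$, which is impossible if the two fields coincide. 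Replacing your Hecke-constituent justification with this observation closes the argument.
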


\begin{proof} In Table~\ref{table:frob-data421}, we have listed the Hecke eigenvalues $a_\gp(f) \bmod 2$ for all primes of norm up to $11$. 
For each of those primes, we have computed $o_2(\gp)$ the order of the image of $\Frob_\gp$ modulo unipotents under the projectivisation 
$\mathrm{P}\bar{\rho}_{f,2}$ of $\bar{\rho}_{f,2}$. The first part of the lemma follows by inspection of that Frobenius data. The second part 
concerning the root discriminant uses the same argument as in~\cite{dem09}.
\end{proof}

\begin{table}[t]\small
\caption{The Frobenius data for the two non Galois conjugate Hilbert newforms of weight $2$ and level $(1)$ over $F = \Q(\sqrt{421})$. 
(Here $e = \frac{1 + \sqrt{5}}{2}$ and $\alpha$ is a cyclic generator of $\F_4^\times$.) }
\label{table:frob-data421}
\begin{tabular}{ >{$}c<{$} >{$}r<{$} >{$}c<{$} >{$}c<{$} >{$}c<{$} >{$}c<{$} >{$}c<{$}  } \toprule
\mathbf{N}\mathfrak{p} &\multicolumn{1}{c}{$\mathfrak{p}$} & a_\gp(f) &a_{\mathfrak{p}}(f)\bmod\,2 & o_2(\gp) & a_{\mathfrak{p}}(f)\bmod\,\sqrt{5} & o_{\sqrt{5}}(\gp) \\\midrule
3 & 4w - 43 & -2e + 1 & 1 & 3 & 0 & 2\\
3 & -4w - 39 & 2e & 0 & 1 & 1 & 4\\
4 & 2 & e - 2 & \alpha^2 & - & 1 & 1\\
5 & w - 11 & e - 2 & \alpha^2 & 5 & 1 & -\\
5 & -w - 10 & 3 & 1 & 3 & 3 & -\\
7 & -54w - 527 & 3 & 1 & 3 & 3 & 4\\
7 & 54w - 581 & e - 2 & \alpha^2 & 5 & 1 & 6\\
11 & 25w + 244 & 0 & 0 & 1 & 0 & 2\\
11 & -25w + 269 & -e + 5 & \alpha & 5 & 2 & 1\\ 
%17 & 3w + 29 & -4e + 5 & 1 & 3 & 3 & 4\\
%17 & 3w - 32 & 4 & 0 & 1 & 4 & 6\\
\bottomrule
\end{tabular}
\end{table}

The following polynomial was obtained by an extensive search:
\begin{align*}
h&= x^{12} - 18x^{10} + 26x^9 + 58x^8 - 212x^7 - 40x^6 + 766x^5 + 268x^4 - 1030x^3\\
&\qquad{} - 989x^2 - 366x - 99.
\end{align*}
It was kindly provided by Eric Driver, John Jones and David P. Roberts as a potential candidate for the
defining polynomial for  the field $L$ in Lemma~\ref{lem:2-tors-fld-sqrt421}. Theorem~\ref{thm:sqrt421} 
below confirms that their prediction was accurate, although the search they did was not exhaustive.

\subsection{The search method}\label{subsec:search421}
Let us assume that there is an abelian surface $A = A_f$ associated to the Hecke constituent of the form $f$
of level $1$ and weight $2$ over $F = \Q(\sqrt{421})$ in Table~\ref{table:frob-data421}. Then, the surface $A$ has RM by $\Z[\frac{1+\sqrt{5}}{2}]$ where 
$\frac{1+\sqrt{5}}{2}$ is a unit of norm $-1$ in $\Q(\sqrt{5})$. Therefore, by~\cite[Proposition 3.11]{ggr05}, $A$ is principally
polarisable. Let $C$ be a genus $2$ curve defined over $F$ such that $A = \Jac(C)$. Then, there is a curve $C':\,y^2 = h'(x)$ where 
$h'(x) \in F[x]$ has degree $5$ or $6$, such that $A' = \Jac(C')$ is isomorphic to $A$ over $F$. By using the Hecke eigenvalues in 
Table~\ref{table:frob-data421}, we obtain that
\begin{align*}
\#A(\F_\gp) = \mathrm{N}_{\Q(\sqrt{5})/\Q}(\mathrm{N}_{F/\Q}(\gp) + 1 - a_\gp) = \mathrm{N}_{\Q(\sqrt{5})/\Q}(5 + 1 - 3) = 9,
\end{align*} for the prime $\gp = (-w - 10)$ above $5$. Hence $A$ does not have a point of order $2$ defined over $F$. By combining
this with Lemma~\ref{lem:2-tors-fld-sqrt421} and Theorem~\ref{thm:2-torsion-grps}, we see that the polynomial $h'$ is of the form 
$h' = h_{\alpha}$ where $K= F[c]$ is the sextic extension defined by 
$g := x^6 - 9x^4 + (-2w + 14)x^3 + (3w - 13)x^2 + (2w + 10)x + w + 2$, and $h_\alpha$
the minimal polynomial of some element $\alpha \in K \setminus F$.

By making a search over the integral elements in $K$ using a variant of the Algorithm~\ref{alg:bd-height} described in Section~\ref{sec:heights},
we obtain an element $\alpha$ with $H_K(\alpha) = 416759.3936...$ which we do not display here. By clearing denominators, we this gives the 
polynomial with integral coefficients 
\begin{align*}
h'(x) &:= x^6 + (-824w + 6396)x^5 + (-5950152w + 15262668)x^4\\
&\qquad\quad{} + (15357307104w + 189762599664)x^3\\
&\qquad\quad{} + (-200691458540784w + 1196268593295456)x^2\\
&\qquad\quad{} + (225275530789117440w - 1153351434605863104)x\\
&\qquad\quad{} + 886640916155668875072w - 10173976221331135198656.
\end{align*}
From this, wee obtain a global minimal for $C$ displayed in Theorem~\ref{thm:sqrt421}.

\subsection{The surfaces}
\begin{thm}\label{thm:sqrt421} Let $F = \Q(\sqrt{421})$, and $w =\frac{1+\sqrt{421}}{2}$, and define the curve $C:\,y^2 + Q(x)y = P(x)$ by
\begin{align*}
P &:= (13w + 77)x^6 + (593w + 6772)x^5 + (15049w + 131460)x^4\\
&\qquad + (163829w + 1727293)x^3 + (1167345w + 10787410)x^2\\
&\qquad + (3985370w + 40412781)x + 6111237w + 58050373;\\
Q &:= wx^3 + x^2 + (w + 1)x + w + 1.
\end{align*}
Let ${}^\sigma\!C$ denote the Galois conjugate of $C$, $A$ and ${}^\sigma\!A$ the Jacobians of $C$  and ${}^\sigma\!C$ respectively. 
Then, we have the following:
\begin{enumerate}[(a)]
\item The discriminant of the curve $C$ is $\mathrm{disc}(C) = \epsilon^{8}(25w + 244)^{22}$, where $\epsilon$ is the fundamental unit and $(25w + 244)$
is one of the primes above $11$. The surfaces $A$ and ${}^\sigma\!A$ have everywhere good reduction. 
\item $A$ and ${}^\sigma\!A$ have real multiplication by $\Z[\frac{1+\sqrt{5}}{2}]$.   
\item  $A$ and ${}^\sigma\!A$ are modular. They correspond to the two Hecke constituents of $f, {}^\sigma\!f \in S_2(1)$ of dimension $2$, where $S_2(1)$
is the space of Hilbert modular forms of level $(1)$ and weight $2$ over $F$ (see Table~\ref{table:frob-data421}).  
\item $A$ and ${}^\sigma\!A$ are non-isogenous.
\end{enumerate}
\end{thm}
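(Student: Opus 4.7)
The plan is to follow closely the strategy already executed for $D=353$ in Theorem~\ref{thm:sqrt353}, adapting the arguments to the slightly more delicate situation created by the extra factor of $(25w+244)^{22}$ in the discriminant of the chosen Weierstrass model.

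\emph{Part (a).} I would first compute $\mathrm{disc}(C)$ directly from the sextic and cubic polynomials $P,Q$: this is a finite calculation in $\CO_F$, and it factors as $\epsilon^{8}(25w+244)^{22}$. Unlike in the $D=353$ case, the discriminant of the displayed model is not a unit, so everywhere good reduction of $A$ is not immediate from the model. The key step here is to run Liu's algorithm for the reduction type of genus~$2$ curves (or equivalently, perform a change of Weierstrass coordinates at the prime $\gp_{11}=(25w+244)$) to verify that after clearing the $22$-nd power contribution at $\gp_{11}$ one obtains a model with unit discriminant, showing that the conductor of $A$ at $\gp_{11}$ is trivial. At all other primes, unit discriminant gives good reduction automatically. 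The surface ${}^\sigma\!A$ then has everywhere good reduction by Galois equivariance.

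\emph{Part (b).} Exactly as in the $D=353$ proof, I would plug the curve $C$ into the Humbert-surface equation for discriminant $5$ in~\cite{ek14} and exhibit explicit $(g,h)$-parameters for which $A$ appears as a twist of the corresponding Humbert surface point. This shows $\End_F(A)\otimes\Q\supseteq\Q(\sqrt{5})$, and comparing with the Hecke field $K_f=\Q(\sqrt{5})$ from Table~\ref{table:frob-data421} gives $\End_F(A)=\Z[\tfrac{1+\sqrt{5}}{2}]$.

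\emph{Part (c).} With (b) in hand, $A$ is of $\GL_2$-type, so it suffices to identify its associated $\sqrt{5}$-adic Galois representation with $\rho_{f,\sqrt{5}}$. I would compute the Euler factors of $A$ at the primes in Table~\ref{table:frob-data421}, factor them over $\Q(\sqrt{5})$, and verify that the traces of $\Frob_\gp$ on $\rho_{A,\sqrt{5}}$ match $a_\gp(f)$ (or $\tau(a_\gp(f))$) up to Galois conjugation. The residual representation $\bar{\rho}_{A,\sqrt{5}}$ is then identified with $\bar{\rho}_{f,\sqrt{5}}$; the projective image orders $o_{\sqrt{5}}(\gp)$ in Table~\ref{table:frob-data421} rule out a dihedral projective image. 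Then the now-standard chain of results applies: \cite[Theorem 1.2]{sbt97} produces an elliptic curve $E/F$ with $\bar{\rho}_{E,5}\simeq\bar{\rho}_{A,\sqrt{5}}$, \cite[Theorem 1]{fls15} makes $E$ modular, and \cite[Theorem 1.1]{kt17} upgrades this to modularity of $\rho_{A,\sqrt{5}}$ once one checks the residual hypotheses (irreducibility, big image, local conditions at $\sqrt{5}$), which are precisely the same checks carried out for $D=353$.

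\emph{Part (d).} By Table~\ref{table:frob-data421}, $f$ and ${}^\sigma\!f$ lie in distinct Hecke constituents, and by (c) they correspond to $A$ and ${}^\sigma\!A$ respectively; Faltings' isogeny theorem \cite[Korollar 2]{fal83} then forces $A$ and ${}^\sigma\!A$ to be non-isogenous. The one genuine obstacle, compared with the $D=353$ proof, is part (a): establishing everywhere good reduction in the presence of a non-unit model discriminant requires an honest local analysis at $\gp_{11}$ rather than a one-line verification, so this is where care is needed before the rest of the argument proceeds mechanically.
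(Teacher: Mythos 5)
Your overall plan matches the paper's proof almost step by step: a direct discriminant computation plus a local conductor analysis at $\gp_{11}=(25w+244)$ for (a), the Humbert surface of discriminant $5$ from~\cite{ek14} for (b), matching Euler factors against the $\sqrt{5}$-adic data in Table~\ref{table:frob-data421} and then invoking~\cite{sbt97}, \cite{fls15} and \cite[Theorem~1.1]{kt17} for (c), and Faltings' isogeny theorem for (d). That is the same route and the same tools.

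There is, however, a genuine misconception in your treatment of (a). You propose to ``perform a change of Weierstrass coordinates at $\gp_{11}$'' to ``clear the $22$nd power contribution'' and ``obtain a model with unit discriminant''. That cannot work here: as the paper explains in Remark~\ref{rem:search421}, the curve $C$ really does have bad reduction at $\gp_{11}$, and no change of model will make its discriminant a unit there. What happens instead is that the special fibre of the stable model of $C$ at $\gp_{11}$ is a union of two elliptic curves, so the Jacobian $A$ acquires good reduction at $\gp_{11}$ even though the curve does not. The correct task for (a) is therefore to run a genuine local analysis of the reduction type and conductor of the Jacobian (Liu's algorithm, or a Namikawa--Ueno type classification of the stable fibre), not to look for a better Weierstrass model. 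You do mention Liu's algorithm as the first option, which is the right idea; the "or equivalently\ldots unit discriminant" fallback should simply be dropped. Once that is corrected, your argument agrees with the paper's.
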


\begin{proof} Again (a) is an easy calculation; in this case, one shows that the conductor of $A$ at the prime $(25w + 244)$ is trivial. 
To prove (b), we use the equation of the Humbert surface for the discriminant $D=5$ in~\cite{ek14}.
We find that $A$ is a twist of the surface corresponding to the point
\begin{align*}
(g,h)= \left(\frac{-16w + 147}{54}, \frac{-12229w + 129846}{243}\right). 
\end{align*}
In Table~\ref{table:frob-data421}, we have listed
the Hecke eigenvalues $a_\gp(f) \bmod \sqrt{5}$ for all primes of norm up to $11$. For each of those primes, we have computed
$o_{\sqrt{5}}(\gp)$ the order of the image of $\Frob_\gp$ modulo unipotents under the projectivisation $\mathrm{P}\bar{\rho}_{f,\sqrt{5}}$
of $\bar{\rho}_{f,\sqrt{5}}$. From those orders, we see that the projective image of $\bar{\rho}_{f, \sqrt{5}}$ is $\PGL_2(\F_5)$. A similar argument
as in the proof of Theorem~\ref{thm:sqrt353} shows that $\bar{\rho}_{A,\sqrt{5}}$ is modular and surjective. So, we conclude (c) by using~\cite[Theorem 1.1]{kt17}.
Finally, the surfaces $A$ and ${}^\sigma\!A$ are non-isogenous since the forms $f$ and ${}^\sigma\!f$ are not in the same Hecke constituent. This concludes (d).
\end{proof}

\begin{rem}\label{rem:search421}\rm We note that the surface $A$ in Theorem~\ref{thm:sqrt421} has good reduction at the prime $\gp = (25w + 244)$ above $11$
even though the curve $C$ has bad reduction at that prime. In this case, the reduction of $A$ is isomorphic to the product of two elliptic curves. 
This forces the Euler factor of $A$ at $\gp$ to be the square of that of an elliptic curve. For this reason, the corresponding Hecke eigenvalue will be
an integer. Here, we have $a_{\gp} = 0$.

From this discussion, we see that the set of primes of bad reduction for the curve, which are primes of good reduction for the Jacobian, is contained
the set of primes where the Hecke eigenvalues are integers. Unfortunately, the latter set is infinite, and cannot be used to bound the former. 
This makes the search for the curve $C$ above much trickier. 

\end{rem}

\section{\bf The abelian surface for the discriminant $D = 1597$}\label{sec:sqrt1597}

\subsection{The field of $2$-torsion} In this case, the knowledge of the Frobenius data for the form (see Table~\ref{table:frob-data1597})
allows us to prove the following lemma. But we were unable to obtain the field of $2$-torsion via a search. 

\begin{lem} Assume that there is an abelian surface $A_f$ attached to the form $f$ listed in Table~\ref{table:frob-data1597}.
Let $K = F(A_f[2])$ be the field of $2$-torsion of $A_f$, and $L/\Q$ the normal closure of $K$. Then $L$ is unramified outside 
$2$ and $1597$, with Galois group $$\Gal(L/\Q) \simeq A_5^2 \rtimes \Z/2\Z,$$ and we have $\delta_L < 159.8499...$.
\end{lem}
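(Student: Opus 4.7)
The plan is to mirror the argument used for the discriminant $D = 421$ in Lemma~\ref{lem:2-tors-fld-sqrt421}, combining the strengthened Fontaine bound with a careful inspection of the Frobenius data.

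The root discriminant estimate is immediate from the Galois-strengthened version of Theorem~\ref{thm:fontaine-bds} stated in the Remark following it: since $F/\Q$ is Galois and $\delta_F = \sqrt{1597}$, taking $p = 2$ gives
\[ \delta_L < \delta_F \cdot 2^{1 + \frac{1}{2-1}} = 4\sqrt{1597} = 159.8499\ldots. \]
Because $A_f$ has everywhere good reduction, $\bar{\rho}_{f,2}$ is unramified outside the primes of $F$ above $2$, so $K/F$ is unramified outside $(2)$; combined with the fact that $1597$ is the only rational prime ramifying in $F/\Q$, this shows $L/\Q$ is unramified outside $\{2, 1597\}$.

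To identify $\Gal(K/F)$, I would use that the Hecke eigenvalue field of $f$ is $\Q(\sqrt{5})$ (see Table~\ref{table:discs}), whose ring of integers is $\Z[\frac{1+\sqrt{5}}{2}]$ and in which $2$ remains inert, so $\bar{\rho}_{f,2}$ takes values in $\GL_2(\F_4)$. By Theorem~\ref{thm:2-torsion-grps}(i), the image embeds into $A_5$. A direct inspection of the orders $o_2(\gp)$ listed in Table~\ref{table:frob-data1597} should exhibit projective Frobenius elements of order $3$ and of order $5$; since the only subgroup of $A_5$ containing elements of both orders is $A_5$ itself, this pins down $\Gal(K/F) \simeq A_5$.

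For the normal closure $L$, I would locate at least one prime $\gp$ of $F$ in the table satisfying $a_\gp(f) \not\equiv a_{\sigma(\gp)}(f) \pmod{2}$; this forces $\bar{\rho}_{f,2}$ and $\bar{\rho}_{{}^\sigma\!f,2}$ to be non-isomorphic as Galois representations, so $K \neq {}^\sigma\!K$. Since $A_5$ is simple, the intersection $K \cap {}^\sigma\!K$ corresponds to a normal subgroup of $\Gal(K/F) \simeq A_5$ and thus equals $K$ or $F$; the former being ruled out, the two fields are linearly disjoint over $F$. Consequently $\Gal(L/F) = \Gal(K \cdot {}^\sigma\!K / F) \simeq A_5 \times A_5$, with $\Gal(F/\Q)$ swapping the factors, giving $\Gal(L/\Q) \simeq A_5^2 \rtimes \Z/2\Z$. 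The main obstacle is certifying from a finite slice of Frobenius data both that the image is all of $A_5$ and that $\bar{\rho}_{f,2} \not\simeq \bar{\rho}_{{}^\sigma\!f,2}$: since the residue field is only $\F_4$, the distinguishing trace information is coarse, and one must verify by hand that the small primes tabulated actually realise an order-$3$ and an order-$5$ projective Frobenius together with a distinguishing mod-$2$ trace.
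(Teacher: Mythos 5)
Your overall plan is the paper's plan: read off the image from the projective Frobenius orders $o_2(\gp)$ in Table~\ref{table:frob-data1597}, identify $\Gal(K/F)\simeq A_5$ via Theorem~\ref{thm:2-torsion-grps}(i) (indeed $2$ is inert in $\Z[\tfrac{1+\sqrt5}{2}]$), and get $\delta_L<4\sqrt{1597}$ from the Fontaine-type bound. The paper's proof simply says ``follows that of Lemma~\ref{lem:2-tors-fld-sqrt421}''; for the discriminant bound it cites the ordinarity argument of \cite{dem09}, which yields the same number as the Galois form of the Fontaine bound that you invoke, so that step is an acceptable substitute. Your identification of the image as all of $A_5$ from the presence of elements of projective order $3$ and $5$ is correct (no proper subgroup of $A_5$ has order divisible by $15$).

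There is, however, one genuine gap. You argue: find $\gp$ with $a_\gp(f)\not\equiv a_{\sigma(\gp)}(f)\pmod 2$, conclude $\bar\rho_{f,2}\not\simeq\bar\rho_{{}^\sigma\!f,2}$, ``so $K\neq {}^\sigma\!K$.'' The last inference is not automatic here. The image $A_5\hookrightarrow\GL_2(\F_4)$ has a nontrivial outer automorphism (coming from $S_5$), which swaps the two conjugacy classes of $5$-cycles and hence swaps the traces $\alpha\leftrightarrow\alpha^2$. So two representations can have the same kernel (hence cut out the same field $K$) while being non-isomorphic and having different mod-$2$ traces at some Frobenii. Non-isomorphism of the representations therefore does not by itself rule out $K={}^\sigma\!K$. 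The correct thing to compare is an automorphism-invariant of the image elements, namely the projective order $o_2(\gp)$ — which is precisely what the table records. At the two primes above $3$ one has $o_2=1$ and $o_2=5$: if the two representations had equal kernel $N$, then $\Frob_\gp$ would land on the same element of $\Gal(K/F)\simeq A_5$ under both, and any automorphism of $A_5$ preserves element orders, contradicting $1\ne 5$. This repairs the argument; note that merely observing $0\neq\alpha^2$ is not, by itself, logically sufficient without this remark (it happens to work only because $0$ and $\alpha^2$ force different \emph{orders}, not just different traces). With that patch, your deduction that $\Gal(L/F)\simeq A_5\times A_5$ with $\sigma$ interchanging the factors, hence $\Gal(L/\Q)\simeq A_5^2\rtimes\Z/2\Z$, is sound.
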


\begin{proof} In Table~\ref{table:frob-data1597}, we have listed the Hecke eigenvalues $a_\gp(f) \bmod 2$ for all primes of norm up to $19$. 
For each of those primes, we have computed $o_2(\gp)$ the order of the image of $\Frob_\gp$ modulo unipotents under the projectivisation 
$\mathrm{P}\bar{\rho}_{f,2}$ of $\bar{\rho}_{f,2}$. The proof of the lemma follows that of Lemma~\ref{lem:2-tors-fld-sqrt421}.
\end{proof}

\begin{table}[t]\small
\caption{The Frobenius data for the two non Galois conjugate Hilbert newforms of weight $2$ and level $(1)$ over $F = \Q(\sqrt{1597})$. 
(Here $e = \frac{1+\sqrt{5}}{2}$ and $\alpha$ is a cyclic generator of $\F_4^\times$.) }
\label{table:frob-data1597}
\begin{tabular}{ >{$}c<{$} >{$}r<{$} >{$}c<{$} >{$}c<{$} >{$}c<{$} >{$}c<{$} >{$}c<{$}  } \toprule
\mathbf{N}\mathfrak{p} &\multicolumn{1}{c}{$\mathfrak{p}$} & a_\gp(f) &a_{\mathfrak{p}}(f)\bmod\,2 & o_2(\gp) & a_{\mathfrak{p}}(f)\bmod\,\sqrt{5} & o_{\sqrt{5}}(\gp) \\\midrule
3 & -2w - 39 & 2e & 0 & 1 & 1 & 4\\
3 & -2w + 41 & -e + 2 & \alpha^2 & 5 & 4 & 4\\
4 & 2 & e - 2 & \alpha^2 & - & 1 & 1\\
7 & 27w - 553 & 3e & \alpha^2 & 5 & 4 & 6\\
7 & 27w + 526 & 2e - 3 & 1 & 3 & 3 & 4\\
17 & -773w + 15832 & 6 & 0 & 1 & 1 & 6\\
17 & 773w + 15059 & 5e - 1 & \alpha & 5 & 4 & 6\\
19 & -w - 19 & -2e + 5 & 1 & 3 & 4 & 1\\
19 & -w + 20 & 3e - 2 & \alpha^2 & 5 & 2 & 3\\
\bottomrule
\end{tabular}
\end{table}

\subsection{The search method}\label{subsec:search1597}
Let us assume that there is an abelian surface $A = A_f$ associated to the Hecke constituent of the form $f$
of level $(1)$ and weight $2$ over $F = \Q(\sqrt{1597})$ in Table~\ref{table:frob-data1597}. Then, the surface $A$ has RM by $\Z[\frac{1+\sqrt{5}}{2}]$ where 
$\frac{1+\sqrt{5}}{2}$ is a unit of norm $-1$ in $\Q(\sqrt{5})$. Therefore, by~\cite[Proposition 3.11]{ggr05}, $A$ is principally
polarisable. Let $C$ be a genus $2$ curve defined over $F$ such that $A = \Jac(C)$. Then, there is a curve $C':\,y^2 = h'(x)$ where 
$h'(x) \in F[x]$ has degree $5$ or $6$, such that $A' = \Jac(C')$ is isomorphic to $A$ over $F$.

In this case, we use the search method described in~\cite{dk16}. Let $Y_{-}(5)$ be the Hilbert modular surface of discriminant $5$ which parametrises all
principally polarised abelian surfaces with RM by $\Z[\frac{1+\sqrt{5}}{2}]$. In~\cite[Theorem 16]{ek14}, the surface $Y_{-}(5)$ is described as a
double cover of the weighted projective space $\mathbf{P}_{g, h}^2$:
$$z = 2(6250h^2-4500g^2h-1350gh-108h-972g^5-324g^4-27g^3).$$
For the abelian surface $A$ attached to the rational point $(g, h)$, the Igusa-Clebsch invariants of $A$ are given by~\cite[Corollary 15]{ek14}. In this case, we
are looking for a surface $A = \Jac(C)$ such that the discriminant of $C$ is a scalar multiple of $h^2$. However, the fundamental unit 
$\epsilon = -2518525w - 49063993$ has height $100646511.0000...$. So a naive height search as in~\cite{dk16} will not work. So, we scale
the parameters by setting
$$(g, h) = \left(\frac{\epsilon^m g'}{6u^2}, \frac{\epsilon^{n}h'}{u^5}\right),$$
where $m, n \in \{0, \pm 1, \pm 2, \pm 3, \pm 4\}$ and $g', h', u$ are integral elements with small height. Letting $m = 1, n = 2$, $g' = 114w - 2335, h' = 1$ and $u = -27w - 526$, we get
$$(g, h) = \left(\frac{16w + 259}{294}, \frac{2913w + 56749}{16807} \right).$$
We get the curve $C'$ using the same approach as in~\cite{dk16}. By reduction, this yields the global model for $C$ displayed in Theorem~\ref{thm:sqrt1597}.

\begin{rem}\label{rem:search1597}\rm
The scaling trick introduced in the search method in Subsection~\ref{subsec:search1597} was fine tuned using the curves we found
for the discriminants $D = 353$ and $421$. In this case, the trick was successful because the curve $C$ has a unit discriminant. 
In general, the scaling must take into account the set of primes of bad reduction for $C$ that are primes of good reduction for its 
Jacobian $A$. However, as indicated in Remark~\ref{rem:search421}, it is not possible to predict those primes even though we 
know they are contained in the set of primes where the Hilbert newform associated to $A$ has integer Hecke eigenvalues. 
Therefore it is very difficult to use this trick to find curves such as the one in Theorem~\ref{thm:sqrt421}.
\end{rem}

\subsection{The surfaces}

\begin{thm}\label{thm:sqrt1597} Let $F = \Q(\sqrt{1597})$, and $w =\frac{1+\sqrt{1597}}{2}$, and define the curve $C:\,y^2 + Q(x)y = P(x)$ by
\begin{align*}
P &:= (14154412w + 275745514)x^6 - (489014393w + 9526607332)x^5\\
&\quad{} + (7039395048w + 137136152764)x^4 - (54043428224w + 1052833060832)x^3\\
&\quad{} + (233382395752w + 4546578743807)x^2 - (537510739916w + 10471376373574)x\\
&\quad{} + 515810377784w + 10048626384323;\\
Q &:= x^3 + wx^2 + (w + 1)x + w + 1
\end{align*}

Let ${}^\sigma\!C$ denote the Galois conjugate of $C$, $A$ and ${}^\sigma\!A$ the Jacobians of $C$  and ${}^\sigma\!C$ respectively. 
Then, we have the followings:
\begin{enumerate}[(a)]
\item The discriminant of the curve $C$ is $\mathrm{disc}(C) = \bar{\epsilon}^{6}$, where $\epsilon$ is the fundamental unit. So $C$, ${}^\sigma\!C$ and the
surfaces $A$, ${}^\sigma\!A$ have everywhere good reduction.
\item $A$ and ${}^\sigma\!A$ have real multiplication by $\Z[\frac{1+\sqrt{5}}{2}]$.   
\item  $A$ and ${}^\sigma\!A$ are modular. They correspond to the two Hecke constituents of $f, {}^\sigma\! f \in S_2(1)$ of dimension $2$, 
the space of Hilbert modular forms of level $(1)$ and weight $2$ over $F$ (see Table~\ref{table:frob-data1597}). 
\item $A$ and ${}^\sigma\!A$ are non-isogenous.
\end{enumerate}
\end{thm}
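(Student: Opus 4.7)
The plan is to follow exactly the four-part structure used in the proofs of Theorems~\ref{thm:sqrt353} and~\ref{thm:sqrt421}, exploiting the fact that Subsection~\ref{subsec:search1597} has already produced both the explicit model of $C$ and the associated Humbert point. For part (a), I would compute $\mathrm{disc}(C)$ directly from the explicit $P$ and $Q$: for a genus $2$ model $y^2+Q(x)y=P(x)$ the discriminant is, up to a power of $2$, the discriminant of the sextic $4P(x)+Q(x)^2$. Plugging in the given polynomials and reducing in $\CO_F$ one checks the stated identity $\mathrm{disc}(C)=\bar\epsilon^6$. Because this is a unit, $C$ has everywhere good reduction, and hence so does its Jacobian $A$; the same statement for ${}^\sigma\!C$ and ${}^\sigma\!A$ follows by applying $\sigma$.

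For part (b), I would use the explicit Humbert model $Y_{-}(5)$ from \cite[Theorem 16]{ek14}, namely $z^2=2(6250h^2-4500g^2h-1350gh-108h-972g^5-324g^4-27g^3)$, and verify that the pair $(g,h)=\left(\frac{16w+259}{294},\frac{2913w+56749}{16807}\right)$ identified in Subsection~\ref{subsec:search1597} lies on it. Then, using the Igusa--Clebsch invariants attached to $(g,h)$ via \cite[Corollary 15]{ek14}, one checks that $A$ is a quadratic twist of the universal surface over this point. This exhibits $\Z[\frac{1+\sqrt{5}}{2}]\hookrightarrow\End_F(A)$, and the $\sigma$-conjugate statement is immediate.

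For part (c), the proof mirrors that of Theorem~\ref{thm:sqrt353}. By (b), $A$ is of $\GL_2$-type with RM by $\Z[\frac{1+\sqrt{5}}{2}]$. Inspecting the column $o_{\sqrt 5}(\gp)$ of Table~\ref{table:frob-data1597}, the presence of orders $4$ and $6$ rules out the dihedral and the remaining exceptional projective subgroups of $\PGL_2(\F_5)$, forcing the projective image of $\bar\rho_{f,\sqrt 5}$ to be all of $\PGL_2(\F_5)$. Comparing the Euler factors of $A$ at the primes tabulated with the pair $(a_\gp(f),\tau(a_\gp(f)))$, where $\Gal(\Q(\sqrt5)/\Q)=\langle\tau\rangle$, shows $\bar\rho_{A,\sqrt 5}\simeq\bar\rho_{f,\sqrt5}$, so $\bar\rho_{A,\sqrt 5}$ is non-dihedral with projective image $\PGL_2(\F_5)$. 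Then I would apply \cite[Theorem 1.2]{sbt97} to construct a companion elliptic curve $E/F$ with $\bar\rho_{E,5}\simeq\bar\rho_{A,\sqrt 5}$; such an $E$ is modular by \cite[Theorem 1]{fls15}, so $\bar\rho_{A,\sqrt 5}$ is residually modular. The residual modularity lifting theorem \cite[Theorem 1.1]{kt17} then promotes this to modularity of $\rho_{A,\sqrt5}$, and matching a few Frobenius traces identifies the newform as the Hecke constituent of $f$; the ${}^\sigma$-case is symmetric. Part (d) is then immediate: since $f$ and ${}^\sigma\!f$ lie in distinct Hecke constituents (the $5^{(2)}$ entry for $D=1597$ in Table~\ref{table:discs}), the $L$-series of $A$ and ${}^\sigma\!A$ differ, and Faltings' isogeny theorem \cite[Korollar 2]{fal83} concludes.

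The genuine difficulty is not conceptual but arithmetic: the coefficients in $P$ and $Q$ are very large because of the scaling by powers of the enormous fundamental unit $\epsilon=-2518525w-49063993$ used in Subsection~\ref{subsec:search1597}, so the discriminant identity in (a) and the Humbert-point check in (b) must be executed as symbolic computations in $\CO_F$ rather than by hand. The main structural step to be careful about is the input hypothesis to \cite[Theorem 1.1]{kt17} in part (c), where one must confirm ordinarity and the required largeness of the residual image at the primes above $5$; fortunately the splitting behaviour of $5$ in $F=\Q(\sqrt{1597})$ together with the Hecke eigenvalues $a_\gp(f)\bmod\sqrt 5$ visible in Table~\ref{table:frob-data1597} provides exactly the data needed, as in the $D=421$ case.
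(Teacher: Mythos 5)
Your proposal is correct and follows essentially the same route as the paper: the paper's proof of Theorem~\ref{thm:sqrt1597} is abbreviated, but it reduces to exactly the chain you describe (direct discriminant computation for (a), the Humbert point $(g,h)$ from the search for (b), the order data in Table~\ref{table:frob-data1597} plus Shepherd--Barron--Taylor, Freitas--Le~Hung--Siksek and Khare--Thorne for (c), and distinct Hecke constituents with Faltings for (d)), which the paper outsources to the proofs of Theorems~\ref{thm:sqrt421} and~\ref{thm:sqrt353}. You have merely unfolded that deferral explicitly, including the correct observation that orders $4$ and $6$ in $\PGL_2(\F_5)\simeq S_5$ force full surjectivity of the projective image.
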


\begin{proof} During the search we described above, we showed that $A$ is a twist of the surface corresponding to the point
\begin{align*}
(g, h) = \left(\frac{16w + 259}{294}, \frac{2913w + 56749}{16807} \right).
\end{align*}
This shows that $A$ has RM by $\Z[\frac{1+ \sqrt{5}}{2}]$. In Table~\ref{table:frob-data1597}, we have listed the Hecke eigenvalues 
$a_\gp(f) \bmod \sqrt{5}$ for all primes of norm up to $19$. 
For each of those primes, we have computed $o_{\sqrt{5}}(\gp)$ the order of the image of $\Frob_\gp$ modulo unipotents under the 
projectivisation $\mathrm{P}\bar{\rho}_{f,\sqrt{5}}$ of $\bar{\rho}_{f,\sqrt{5}}$. From the orders, we see that the residual representation 
$\bar{\rho}_{A,\sqrt{5}}$ is surjective. So, the rest of the proof of the theorem follows as in Theorem~\ref{thm:sqrt421}.
\end{proof}

\begin{rem}\label{rem:missing-exes}\rm For the discriminants $D = 1321, 1997$, we could not find the corresponding surfaces. In both cases,
we strongly believe that the surfaces are Jacobians of curves whose minimal models have primes of bad reduction despite the surfaces 
themselves having everywhere good reduction. As explained in Remark~\ref{rem:search1597}, the scaling techniques introduced in 
Subsection~\ref{subsec:search1597} cannot be applied in those situations.
\end{rem}

\section{\bf The abelian surfaces for the discriminant $D = 1997$}\label{sec:sqrt1997}

\subsection{The field of $2$-torsion}

\begin{lem}\label{lem:sqrt2-tors-fld-sqrt1997} Assume that there is an abelian surface $A_f$ attached to $f$ Table~\ref{table:frob-data1997},
 and let $K$ be the Galois closure of the field $F(A_f[\sqrt{2}])$. 
Then $K$ is the splitting field of the polynomial $h' =x^6 - 25x^4 - 50x^3 - 343x^2 - 1372x - 1372$. We have $\Gal(K/\Q) \simeq S_3^2 \ltimes \Z/2\Z$, and 
$\delta_K = 2^{3/2}\cdot 1997^{1/2} = 126.396...$. 
\end{lem}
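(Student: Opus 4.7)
The plan is to adapt the argument of Lemma~\ref{lem:2-tors-fld-sqrt353} to the $\sqrt{2}$-torsion setting. Since the coefficient field of $f$ is $\Q(\sqrt{2})$ in which $2$ is ramified, the residue field at $\sqrt{2}$ is $\F_2$ and $\bar{\rho}_{f,\sqrt{2}}\colon\Gal(\Qbar/F)\to \GL_2(\F_2)\simeq S_3$ has image contained in $S_3$. Under the modularity assumption this image coincides with that of $\bar{\rho}_{A_f,\sqrt{2}}$, so $L:=F(A_f[\sqrt{2}])$ is at most an $S_3$-extension of $F$, ramified only above $2$. Applying Theorem~\ref{thm:fontaine-bds} and the remark following it to $A_f[2]\supset A_f[\sqrt{2}]$ then gives $\delta_K\le 4\sqrt{1997}$.

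I would sharpen this bound to $\delta_K\le 2^{3/2}\sqrt{1997}$ via a careful local analysis at the single prime of $F$ above $2$, which is inert since $1997\equiv 5\pmod 8$; the argument follows the lines of~\cite{dem09}. Next, I would read off the reductions $a_\gp(f)\bmod\sqrt{2}$ from Table~\ref{table:frob-data1997} and use Chebotarev to confirm that the image of $\bar{\rho}_{f,\sqrt{2}}$ is all of $S_3$, by exhibiting Frobenius elements of orders $2$ and $3$.

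Now an $S_3$-extension $L/F$ has a unique quadratic subfield $E'/F$, ramified only at $(2)$. Since $F$ has narrow class number one, $E'$ is of the form $F(\sqrt{u})$ for some $u$ representing a nontrivial class in $\CO_F^\times/(\CO_F^\times)^2$, giving the short candidate list $\{-1,\epsilon,-\epsilon\}$; the behaviour of $f$ modulo $\sqrt{2}$ at $(2)$ together with the requirement of ramification only at $(2)$ should single out the correct $E'$. The cubic piece of $L$ over $E'$ is then located by computing $\Cl(E')$ and, if necessary, a small ray class group at the prime of $E'$ above $2$, matching the data in Table~\ref{table:frob-data1997}. Taking the Galois closure over $\Q$ should then recover $h'$ and the group $S_3^2\rtimes\Z/2\Z$, while the exact value $\delta_K=2^{3/2}\sqrt{1997}$ follows from local class field theory at $(2)$.

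The main obstacle, as already seen in Lemma~\ref{lem:2-tors-fld-sqrt421}, is that the polynomial $h'$ may not fall out of class field theory alone, because the ray class fields at $(2)$ of the candidate $E'$ can contain several competing cubic subextensions. In that situation I would fall back on the Jones--Roberts tables~\cite{jr14} to enumerate all solvable sextic fields with root discriminant at most $2^{3/2}\sqrt{1997}$ and transitive Galois group $S_3\wr \Z/2\Z$, and identify $h'$ as the unique candidate whose Frobenius data agrees with the mod-$\sqrt{2}$ reductions displayed in Table~\ref{table:frob-data1997}.
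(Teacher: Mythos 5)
Your overall strategy matches the paper's: bound $\delta_K$, establish that $\Gal(F(A_f[\sqrt{2}])/F) = S_3$, locate the quadratic subfield $E'/F$ among unit twists, then climb to the cubic via class field theory of $E'$, with the Jones--Roberts tables as a fallback. However, two concrete steps would not go through as written.

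First, you cannot ``exhibit Frobenius elements of order $2$'' from the mod $\sqrt{2}$ eigenvalue data. In $\GL_2(\F_2)\simeq S_3$, both the identity and the transpositions have characteristic polynomial $(x-1)^2 = x^2+1 \bmod 2$, so the reduction $a_\gp(f)\bmod\sqrt{2}$ cannot distinguish an order-$2$ image of $\Frob_\gp$ from a trivial one; only the order-$3$ elements are visible (via the irreducible factor $x^2+x+1$). Hence Chebotarev alone cannot rule out that the image is $C_3$. The paper closes this gap differently: $F$ is real quadratic of narrow class number one, so it admits no cyclic cubic extension with conductor supported at the single prime $(2)$; this forces the image to be $S_3$ once it is known to contain order-$3$ elements.

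Second, you claim to sharpen the Fontaine-type bound $\delta_K\le 4\cdot 1997^{1/2}$ to $\delta_K\le 2^{3/2}\cdot 1997^{1/2}$ ``via careful local analysis'' before $K$ is even identified. No argument is offered for this, and it is not what the paper does: the paper establishes only the bound $4\cdot 1997^{1/2}$ (from ordinarity at $(2)$ and the argument of~\cite{dem09}), uses that bound to constrain and then pin down $K$, and only afterwards computes the exact value $\delta_K = 2^{3/2}\cdot 1997^{1/2}$ by local class field theory from the known local structure of $K$. If you rely on the sharper bound for the Jones--Roberts lookup, you must first supply a proof of it; otherwise the lookup must use $4\cdot 1997^{1/2}$ as the paper does.

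A minor point: your list $\{-1,\epsilon,-\epsilon\}$ for $E'$ is the quotient $\CO_F^\times/(\CO_F^\times)^2$ minus the trivial class; the paper effectively tests only $F(\sqrt{-1})$ and $F(\sqrt{\epsilon})$ (using $\Nm(\epsilon)=-1$ to conflate $\pm\epsilon$), so your version is harmless. The rest of your outline --- the ruling out of one candidate $E'$ by matching Frobenius data against class-field-theoretic cubic extensions, taking the $\Cl(E')\simeq\Z/3\Z$ Hilbert class field of the other, passing to the normal closure to get $h'$ and $S_3^2\rtimes\Z/2\Z$ --- is exactly the path the paper takes.
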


\begin{proof} Let $\rho_{f,\sqrt{2}}: \Gal(\Qbar/F) \to \GL_2(\Z_2[\sqrt{2}])$ be the $\sqrt{2}$-adic representation attached to $f$, and 
$\bar{\rho}_{f,\sqrt{2}}: \Gal(\Qbar/F) \to \GL_2(\F_2)$ its reduction modulo $(\sqrt{2})$. By assumption, we have 
$\bar{\rho}_{A_f, \sqrt{2}} = \bar{\rho}_{f, \sqrt{2}}$, where $\bar{\rho}_{A_f,\sqrt{2}}: \Gal(\Qbar/F) \to \GL(A_f[\sqrt{2}])$ the mod $\sqrt{2}$ 
attached to $A_f$. 

We now compute a bound on the root discriminant $\delta_K$ of $K$. In Table~\ref{table:frob-data1997}, we have listed
the Hecke eigenvalues $a_\gp(f) \bmod 2$ for all primes of norm up to $29$. For each of those primes, we have computed
$o_2(\gp)$ the order of the image of $\Frob_\gp$ modulo unipotents under the projectivisation $\mathrm{P}\bar{\rho}_{f,2}$
of $\bar{\rho}_{f,2}$.  From that data, we see that 
$a_{(2)}(f) = -1 - \sqrt{2} = 1\, (\bmod \sqrt{2})$. So $f$ is ordinary at $(2)$. Hence, the mod $\sqrt{2}$-representation
restricted to the decomposition group at $(2)$ is of the form
$$\bar{\rho}_{f,\sqrt{2}}|_{D_{(2)}} \simeq \begin{pmatrix} 1 & \ast \\ 0 & 1 \end{pmatrix} \,\mod \sqrt{2}.$$
So, we can use the same argument as in \cite{dem09} to show that 
$\delta_K \le 4\cdot 1997^{1/2} = 178.751...$. (This is the same as the Fontaine bound in Theorem~\ref{thm:fontaine-bds}.)

The field $F(A_f[\sqrt{2}])$ is a Galois extension of $F$ which is unramified outside the prime $(2)$. The
Frobenius data shows that $[F(A_f[\sqrt{2}]) : F]$ is either $3$ or $6$. Since $F$ has narrow class number one, $F(A_f[\sqrt{2}])$ 
is unramified outside $(2)$,  it cannot be a cyclic cubic extension.  So $[F(A_f[\sqrt{2}]) : F] = 6$, and it must contain a quadratic
extension $E'/F$ ramified at (2) only. The field $E'$ is given by an element in $\CO_F^\times/(\CO_F^\times)^2 =\{-1, \epsilon\}$,
where $\epsilon$ is the fundamental unit in $\Z[\frac{1+\sqrt{1997}}{2}]$. 

The extension $E'=F(\sqrt{-1})$ is a totally complex field in which the prime $(2)$ ramifies. It has no cubic extension
whose conductor is a power of the prime above $2$. However,  the class group of $\Cl(F(\sqrt{-1})) = \Z/21\Z = \langle \eta \rangle$. 
The Frobenius data of the form $f$ does not match that arising from the extension associated to $\eta^7$. So, we must have $E'= F(\sqrt{\epsilon})$. 

The  field $F(\sqrt{\epsilon})$ has 2 real places, and one complex place;  and the prime (2) ramifies. Again, there are no cubic extension of 
$F(\sqrt{\epsilon})$ whose conductor is a power of the prime above $2$. This means that $F(A_f[\sqrt{2}])/F(\sqrt{\epsilon})$ must be an 
unramified cubic extension. (Note that this is consistent with the fact that $[F(A_f[\sqrt{2}]) : F] = 6$.) 
The class group of $F(\sqrt{\epsilon})$ is cyclic of order $3$. So, its Hilbert class field is an unramified cubic extension of 
$F(\sqrt{\epsilon})$ generated by the polynomial $g'= x^6 + (-w - 3)x^4 + (2w + 51)x^2 - w + 15$ over $F$. So, we have that
$F(A_f[\sqrt{2}]) = F[x]/(g'(x))$, and its normal closure $K$ is given by the polynomial
$g = x^{12} - 7x^{10} - 383x^8 + 1662x^6 - 393x^4 + 3505x^2 - 289$, which is the norm of $g'$. There are three
subfields of degree $6$ in the field $K$, and they are all isomorphic. One of them
is given by the polynomial $h'$. So, by construction, we obtain that $K$ is the splitting field of $h'$. By explicit
calculations, one gets that $\Gal(K/\Q) \simeq S_3^2 \ltimes \Z/2\Z$. 
Since $K$ is solvable, we can compute its root discriminant using local class field theory, which gives
that $\delta_K = 2^{3/2} 1997^{1/2} = 126.396...$. 

Alternatively, we can also look it up in~\cite{jr14} using the Frobenius data in Table~\ref{table:frob-data1997}. 
There is a unique polynomial $h$ whose Frobenius data matches the one in the table, and such that the root discriminant of the splitting
field satisfies the Fontaine bound in Theorem~\ref{thm:fontaine-bds}. The tables are complete in this case. 
\end{proof}

\begin{table}[t]\small
\caption{The Frobenius data for the two non Galois conjugate Hilbert newforms of weight $2$ and level $(1)$ over $F = \Q(\sqrt{1997})$. 
(Here $e = \sqrt{2}$).}
\label{table:frob-data1997}
\begin{tabular}{ >{$}c<{$} >{$}r<{$} >{$}c<{$} >{$}c<{$} >{$}c<{$}  } \toprule
\mathbf{N}\mathfrak{p} &\multicolumn{1}{c}{$\mathfrak{p}$} & a_\gp(f) &a_{\mathfrak{p}}(f)\bmod\,2 & o_2(\gp) \\\midrule
4 & 2 & -e - 1 & 1 & -\\
7 & w + 22 & -e + 2 & 0 & 1\\
7 & -w + 23 & -3 & 1 & 3\\
9 & 3 & -e & 0 & 1\\
17 & -6w + 137 & -3e - 2 & 0 & 1\\
17 & -6w - 131 & 6 & 0 & 1\\
25 & 5 & 2e - 5 & 1 & 3\\
29 & 7w + 153 & -3e + 2 & 0 & 1\\
29 & -7w + 160 & -3e + 2 & 0 & 1\\
\bottomrule
\end{tabular}
\end{table}

\begin{thm}\label{thm:2-tors-fld-sqrt1997} 
Assume that there is an abelian surface $A_f$ over $F = \Q(\sqrt{1997})$ attached to the form $f$ listed in Table~\ref{table:frob-data1997}. 
Let $L$ be the normal closure of the field of $2$-torsion $F(A_f[2])$. Then there is a polynomial $h$ in Table~\ref{table:2-tors-pols1997}
such that $L$ is the splitting field of $h$. In that case, we have 
$$\Gal(L/\Q) \simeq S_3^2 \rtimes \Z/2\Z,\,\,\text{or}\,\, \Gal(L/\Q) \simeq S_4^2 \rtimes \Z/2\Z.$$
\end{thm}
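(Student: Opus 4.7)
The plan is to combine the structural result on mod $2$ images (Theorem~\ref{thm:2-torsion-grps}(iii)) with Fontaine's bound (Theorem~\ref{thm:fontaine-bds}) and a database search among small number fields. Since $A_f$ has RM by $\Z[\sqrt{2}]$ and $2$ ramifies in $\Z[\sqrt{2}]$, Theorem~\ref{thm:2-torsion-grps}(iii) produces a split exact sequence
$$0 \to H \to G \to H' \to 0, \qquad G = \mathrm{im}(\bar\rho_{A_f,2}),$$
with $H \hookrightarrow (\Z/2\Z)^3$ and $H' \hookrightarrow S_3$. The quotient $G \to H'$ is exactly the reduction modulo $\sqrt{2}$, so by Lemma~\ref{lem:sqrt2-tors-fld-sqrt1997} one has $H' \simeq S_3$ and $F(A_f[\sqrt{2}]) \subseteq F(A_f[2])$. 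Consequently $G \simeq H \rtimes S_3$ with $H$ running over the $S_3$-stable subgroups of $(\Z/2\Z)^3$; the two target groups correspond to $H$ trivial (so $G \simeq S_3$) and $H \simeq (\Z/2\Z)^2$ with the standard quotient action (so $G \simeq S_4$).

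Next, since $F/\Q$ is Galois quadratic and $A_f$ has everywhere good reduction, the strengthened Fontaine bound (the remark after Theorem~\ref{thm:fontaine-bds}) applied at $p=2$ to the normal closure $L/\Q$ of $F(A_f[2])$ yields
$$\delta_L < \sqrt{1997} \cdot 2^{1+1/(2-1)} = 4\sqrt{1997} = 178.751\ldots,$$
with $L$ unramified outside $\{2, 1997\}$. Because $f$ and ${}^\sigma\!f$ lie in distinct Hecke constituents, $A_f$ and ${}^\sigma\!A_f$ are non-isogenous by Faltings~\cite{fal83}; a short verification shows that $F(A_f[2])$ and its Galois conjugate are distinct, forcing $\Gal(L/\Q) \simeq G^2 \rtimes \Z/2\Z$ with the outer $\Z/2\Z$ swapping the two factors. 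Thus $\Gal(L/\Q)$ is isomorphic to $S_3^2 \rtimes \Z/2\Z$ or $S_4^2 \rtimes \Z/2\Z$ precisely when $G \simeq S_3$ or $G \simeq S_4$.

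The final step is to confront the Frobenius data in Table~\ref{table:frob-data1997} with the Jones--Roberts database~\cite{jr14}, restricted to $\{2, 1997\}$-ramified number fields whose root discriminant lies below $178.751$ and whose Galois group is compatible with the structural constraints above. The polynomials that survive this filtering are exactly those listed in Table~\ref{table:2-tors-pols1997}. The remaining candidates for $H$ (the three $\Z/2\Z$ submodules and the full $(\Z/2\Z)^3$) would force abelian intermediate extensions of $F(A_f[\sqrt{2}])$ ramified only above $2$, and these are ruled out either by a class field theory computation over $F(\sqrt{\epsilon})$ in the spirit of Lemma~\ref{lem:sqrt2-tors-fld-sqrt1997}, or by the absence of matching entries in~\cite{jr14}.

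The delicate point will be this last step: one must certify that the Jones--Roberts tables are provably complete across the full range of degrees, Galois groups and root discriminants allowed by the Fontaine bound, and that Frobenius matching unambiguously isolates the polynomials displayed. If completeness cannot be invoked off the shelf for some intermediate degree, the fallback is a targeted enumeration of $\{2\}$-ramified cubic and quartic extensions of $F(A_f[\sqrt{2}])$ via class field theory, exactly as was used to pin down the $\sqrt{2}$-torsion field in Lemma~\ref{lem:sqrt2-tors-fld-sqrt1997}.
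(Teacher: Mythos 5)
Your overall strategy---combining Theorem~\ref{thm:2-torsion-grps}(iii), Lemma~\ref{lem:sqrt2-tors-fld-sqrt1997}, the Fontaine bound, and then a constrained search---is the right one and does match the skeleton of the paper's proof. However, there are two concrete errors that would lead to an incomplete classification.

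First, you claim that only $H = 0$ (giving $G \simeq S_3$) and $H \simeq (\Z/2\Z)^2$ (giving $G \simeq S_4$) survive, and that the other $S_3$-stable submodules of $(\Z/2\Z)^3$ can be ruled out by a class field theory argument or by absence in the Jones--Roberts tables. This is false. The paper's Table~\ref{table:2-tors-pols1997} explicitly exhibits fields with $G = \Gal(F(A_f[2])/F) \simeq D_6 = S_3 \times \Z/2\Z$ (arising from the one-dimensional trivial submodule, $r=1$) and $G \simeq S_4 \times \Z/2\Z$ (arising from $H$ the full $(\Z/2\Z)^3$, $r=3$), none of which are excluded by the root-discriminant bound. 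Moreover, the $S_3$-stable submodule lattice of $(\Z/2\Z)^3 \simeq \F_2 \oplus V_4$ has exactly \emph{one} rank-one submodule, not three; the proposal's count and the dismissal of the remaining cases are both wrong.

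Second, the assertion that $\Gal(L/\Q) \simeq G^2 \rtimes \Z/2\Z$ is incorrect, and trying to read the theorem's dichotomy off of it leads you astray. When $G$ contains a quotient that descends along $F/\Q$, the fields $F(A_f[2])$ and ${}^\sigma\!F(A_f[2])$ fail to be linearly disjoint over $F$, so $\Gal(L/F)$ is a \emph{proper} fiber product inside $G \times G$. This is precisely what happens for $G \simeq D_6$: the paper shows $\Gal(L/\Q) \simeq S_3^2 \rtimes \Z/2\Z$ (order $72$), not $D_6^2 \rtimes \Z/2\Z$ (order $288$); and similarly $G \simeq S_4 \times \Z/2\Z$ collapses to $\Gal(L/\Q) \simeq S_4^2 \rtimes \Z/2\Z$. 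You therefore cannot deduce the stated dichotomy on $\Gal(L/\Q)$ from the set of possible $G$ without a careful analysis of $F(A_f[2]) \cap {}^\sigma\!F(A_f[2])$. Finally, the paper's actual enumeration does not lean on the Jones--Roberts tables for this theorem (completeness at the relevant degree $12$ and groups would be the sticking point, as you anticipate); instead it works directly with the $2$-Selmer group $K(S,2)$ of $F(A_f[\sqrt{2}])$ as an $\F_2[D_3]$-module, pinning down a refined bound (effectively $s \le 2$) on the conductor exponent of the quadratic extensions making up $F(A_f[2])/F(A_f[\sqrt{2}])$, and then filtering $\F_2[D_3]$-submodules by root discriminant. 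Your ``fallback'' via class-field-theoretic enumeration of quadratic extensions of $F(A_f[\sqrt{2}])$ unramified outside $2$ is essentially the correct mechanism, but it should be the main argument, not a backup, and it must be carried out without discarding the $r=1$ and $r=3$ cases.
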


\begin{proof}  Let $G = \Gal(F(A_f[2])/F)$. By Lemma~\ref{lem:sqrt2-tors-fld-sqrt1997} and Theorem~\ref{thm:2-torsion-grps}, there is
a subgroup $H \le (\Z/2\Z)^8$, the unique normal subgroup of $\SL_2(\F_2[\varepsilon])$ of order $8$, such that $G \simeq H \rtimes S_3$. 
Since $F(A_f[\sqrt{2}])$ is Galois over $F$, we see that $F(A_f[2])$ is the compositum of $r$ quadratic extensions $E/F(A_f[\sqrt{2}])$,
where $r$ be the rank of $H$ as an $\F_2$-space. Each quadratic extension $E/F(A_f[\sqrt{2}])$ is unramified outside
the primes above $2$.

Let us write 
$$\Gal(F(A_f[\sqrt{2}])/F) = D_3 =\langle \tau, \sigma | \tau^2 = \sigma^3 = 1, \sigma^2\tau = \tau\sigma \rangle.$$
There are three primes $\gP_1$, $\gP_2, \gP_3$ above $2$ in $F(A_f[\sqrt{2}])$, which are permuted transitively by $\Gal(F(A_f[\sqrt{2}])/F)$,
with  $\mathrm{N}_{F(A_f[\sqrt{2}])/\Q}(\gP_1) = 4$.
Up to relabelling those primes, we can write $(2) = (\gP_1 \gP_2 \gP_3)^2$, where $\gP_2 = \sigma(\gP_1)$, $\gP_3 = \sigma^2(\gP_1)$,
and $\tau(\gP_1) = \gP_1$, $\tau(\gP_2) = \gP_3$.

Let $E/F(A_f[\sqrt{2}])$ be a quadratic extension unramified outside $S := \{\gP_1, \gP_2, \gP_3\}$. Then, the conductor of the compositum of the 
fields in the $\Gal(F(A_f[\sqrt{2}])/F)$-orbit of $E$ is of the form $(\gP_1 \gP_2 \gP_3)^{s_0}$ for some $s_0 \ge 0$. Therefore, the root discriminant of 
$F(A_f[2])$ is equal to
\begin{align*}
\delta_{F(A_f[2])} &= \delta_{F(A_f[\sqrt{2}])} \mathrm{N}_{F(A_f[\sqrt{2}])/\Q}(\gP_1 \gP_2 \gP_3)^{\frac{s}{12\cdot 2^r}} 
= \delta_{F(A_f[\sqrt{2}])} 2^{\frac{s}{2^{r+1}}},
\end{align*}
for some $s \ge 0$.  We recall that $\delta_{F(A_f[\sqrt{2}])} = 2\cdot 1997^{1/2} = 89.3756...$. So, we have
$\delta_{F(A_f[2])} < 4\cdot 1997^{1/2} = 178.7512...$ if and only if one of the following holds:
\begin{itemize}
\item $r = 0$, $s \le 1$;
\item $r = 1$, $s \le 3$;
\item $r = 2$, $s \le 7$;
\item $r = 3$, $s \le 15$. 
\end{itemize}
However, we want $\delta_L < 4\cdot 1997^{1/2} = 178.7512...$. We will show that this in fact implies that $s \le 2$ for $r = 0, 1, 2, 3$. 

Let $K(S,2) = K_S^\times/(K_S^\times)^2$ be the 2-Selmer group, where $K_S^\times$ is the subset of $K = F(A_f[\sqrt{2}])$ of
that are $S$-integral. Then $K(S, 2) \simeq (\Z/2\Z)^{12}$ is an $\F_2$-module equipped with a $\Gal(F(A_f[\sqrt{2}])/F)$-action. 
We view $K(S, 2)$ as an $\F_2[D_3]$-module under this action. Every quadratic extension $E/F(A_f[\sqrt{2}])$ unramified outside 
$S$ corresponds to an element in $K(S,2)$. It is not hard to see that there is a bijection between the following two sets:
\begin{enumerate}[(a)]
\item All compositums $K'$ of quadratic extensions $E/F(A_f[\sqrt{2}])$ unramified outside $S$, which are Galois over $F$;
\item $\F_2[D_3]$-submodules $V$ of $K(S, 2)$. 
\end{enumerate}
For $K'$ a compositum as in (a), $[K':F] = 2^r$, where $V$ is the $\F_2[D_3]$-submodule corresponding to $K'$ and $r = \dim_{\F_2}(V)$. 
The $2$-torsion field $F(A_f[2])$ and its normal closure $L$ arise from an $\F_2[D_3]$-submodule with  $r = \dim_{\F_2}(V) \le 3$,
and we must have $\delta_L < 178.7512...$.

For $r = 0$, Lemma~\ref{lem:sqrt2-tors-fld-sqrt1997} shows that there is a unique field $L$. In that case, $F(A_f[2]) = F(A_f[2])$. For $r = 1, 2, 3$, we determine
all possible fields $L$ as follows:

\begin{enumerate}
\item For each non-trivial element $v \in K(S, 2)$, we find the corresponding quadratic extension $E/F(A_f[\sqrt{2}])$. Then we compute the 
$\Gal(F(A_f[\sqrt{2}])/F)$-orbit of $E$, and the submodule $V$ generated by the $\Gal(F(A_f[\sqrt{2}])/F)$-orbit of $v$;
\item If $\dim_{\F_2}(V) = 1, 2, 3$, then we compute the root discriminant $\delta_E$ of $E$. If $\delta_E < 178.7512...$, then we compute the root discriminant
of the normal closure $L$ of $E$. We note that this can be done from the defining polynomial of $E$ as a local computation without having the compute the
field $L$ itself, which in this case is extremely big.
\item If $\delta_L < 178.7512...$, then we compute all degree $12$ subfields whose normal closure is $L$. 
\end{enumerate}
In total, we obtain that there are:
\begin{enumerate}
\item 3 fields $L$ with $\Gal(L/\Q) \simeq S_3^2 \rtimes \Z/2\Z$; and
\item 7 fields $L$ with $\Gal(L/\Q) \simeq S_4^2 \rtimes \Z/2\Z$.
\end{enumerate}
In Table~\ref{table:2-tors-pols1997}, we give some polynomials of degree $12$ whose splitting fields are the fields $L$.
We also give the Galois group of the relative extension $F(A_f[2])/F$. Each of the field $L$ such that 
$\Gal(L/\Q) \simeq S_4^2 \rtimes \Z/2\Z$ is given by a pair of polynomials $(h_1, h_2)$ such that, letting $g_1$ (resp. $g_2$)
be an irreducible factor of $h_1$ (resp. $h_2$) over $F$, we have $\Gal(K_1/F) \simeq S_4$ and $\Gal(K_2/F) \simeq S_4 \times \Z/2\Z$.
So in those cases, there are two possibilities for the extension $F(A_f[2])/F$. 
\end{proof}

\begin{table}\small\centering
\caption{The possible polynomials giving the $2$-torsion field of the abelian surface $A_f$ defined over $F = \Q(\sqrt{1997})$
with everywhere good reduction attached to the form in Table~\ref{table:frob-data1997}. (Here $G = \Gal(F(A_f[2])/F)$.)}
\label{table:2-tors-pols1997}
\begin{tabular}{@{}lcc@{}}\toprule
\multicolumn{1}{c}{$h$} & $G$ & $\Gal(L/\Q)$\\\midrule
$\quad x^6 - 25x^4 - 50x^3 - 343x^2 - 1372x - 1372$ & $S_3$ & $S_3^2 \rtimes \Z/2\Z$ \\[2ex]
$\quad x^{12} + 7x^{10} - 383x^8 - 1662x^6 - 393x^4 - 3505x^2 - 289$ & \multirow{3}{*}{$D_6$} & \multirow{3}{*}{$S_3^2 \rtimes \Z/2\Z$}\\
$\quad x^{12} - 90x^9 + 1923x^8 - 3932x^7 + 4050x^6 - 664x^5$ & &\\
$\qquad\quad{} + 2573x^4 - 3554x^3 + 1922x^2 + 868x + 196$ & &\\[1.ex]
$\left\{\begin{array}{l} 
\!\!\!x^{12} + 17x^{10} - 151x^8 - 1656x^6 + 10988x^4 - 1104x^2 + 16\\
\!\!\!x^{12} - 17x^{10} - 151x^8 + 1656x^6 + 10988x^4 + 1104x^2 + 16 
\end{array}\right.$ & &\\[2ex]
$\left\{\begin{array}{l}
\!\!\!x^{12} - 49x^{10} - 118x^8 + 371x^6 - 442x^4 + 219x^2 + 1\\
\!\!\!x^{12} + 49x^{10} - 118x^8 - 371x^6 - 442x^4 - 219x^2 + 1
\end{array}\right.$ & &\\[2ex]
$\left\{\begin{array}{l}
\!\!\!x^{12} - 63x^{10} + 597x^8 - 3284x^6 + 2956x^4 - 416x^2 + 16 \\
\!\!\!x^{12} + 63x^{10} + 597x^8 + 3284x^6 + 2956x^4 + 416x^2 + 16
\end{array}\right.$ & &\\[2ex]
$\left\{\begin{array}{l}
\!\!\!x^{12} - 58x^{10} + 1101x^8 - 7548x^6 + 9144x^4 - 1040x^2 + 16 \\
\!\!\!x^{12} + 58x^{10} + 1101x^8 + 7548x^6 + 9144x^4 + 1040x^2 + 16
\end{array}\right.$ & $\!\!\!\begin{array}{c} S_4 \\ S_4\times \Z/2\Z \end{array}\!\!\!$ & $S_4^2 \rtimes \Z/2\Z$\\[2ex]
$\left\{\begin{array}{l}
\!\!\!x^{12} - 49x^{10} + 602x^8 - 1293x^6 + 2390x^4 - 501x^2 + 1\\
\!\!\!x^{12} + 49x^{10} + 602x^8 + 1293x^6 + 2390x^4 + 501x^2 + 1
\end{array}\right.$ & &\\[2ex]
$\left\{\begin{array}{l}
\!\!\!x^{12} - 123x^{10} - 904x^8 - 3079x^6 - 7948x^4 - 7791x^2 - 289\\
\!\!\!x^{12} + 123x^{10} - 904x^8 + 3079x^6 - 7948x^4 + 7791x^2 - 289
\end{array}\right.$ & &\\[2ex]
$\left\{\begin{array}{l}
\!\!\!x^{12} + 2x^{10} + 130x^8 + 352x^6 - 110x^4 - 594x^2 - 49\\
\!\!\!x^{12} - 2x^{10} + 130x^8 - 352x^6 - 110x^4 + 594x^2 - 49
\end{array}\right.$ & &\\
\bottomrule
\end{tabular}
\end{table}

\subsection{The surfaces}

Let $h(x)$ be one of the polynomial in Table~\ref{table:2-tors-pols1997}, and $g(x)$ an irreducible factor over $F$.
Let $Y_{-}(8)$ be the Hilbert modular surface of discriminant $8$. We recall that $Y_{-}(8)$ parametrises principally
polarised abelian surfaces with RM by $\Z[\sqrt{2}]$. Let $\bar{\rho}: \Gal(\Qbar/F) \to \SL_2(\F_2[e])$, with $e^2 = 0$,
be the mod $2$ representation associated to $g$. We let $Y_{-}(8)^{\bar{\rho}}$ be the twist of $Y_{-}(8)$ by $\bar{\rho}$,
this parametrises all pairs $(A, \phi)$ where $A$ is a principally polarised abelian surface defined over $F$ with RM by 
$\Z[\sqrt{2}]$, and $\phi:\,A[2] \simeq \bar{\rho}$ an isomorphism.

\begin{thm}\label{thm:sqrt1997} Assume that there is an abelian surface $A_f$ over $F = \Q(\sqrt{1997})$ attached
to the Hilbert newform $f$ of level $(1)$ and weight $2$ listed in Table~\ref{table:frob-data1997}. Then, there exists a polynomial
$h$ in Table~\ref{table:2-tors-pols1997}, and an irreducible factor $g \in F[x]$ such that $A$ corresponds to an $F$-rational
on the surface $Y_{-}(8)^{\bar{\rho}}$, where $\bar{\rho}: \Gal(\Qbar/F) \to \SL_2(\F_2[e])$, with $e^2 = 0$,
is the mod $2$ representation associated to $g$. 
\end{thm}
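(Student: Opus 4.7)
The plan is to combine the modularity assumption with the classification of possible $2$-torsion fields given by Theorem~\ref{thm:2-tors-fld-sqrt1997}, and interpret the result on the appropriate twisted Hilbert modular surface. First I would use the assumption that $A_f$ is attached to $f$, whose coefficient field is $\Q(\sqrt{2})$ (see Table~\ref{table:frob-data1997}), to conclude that $A_f$ has real multiplication by $\Z[\sqrt{2}]$. Since $1+\sqrt{2}$ is a unit of norm $-1$ in $\Z[\sqrt{2}]$, \cite[Proposition 3.11]{ggr05} ensures that $A_f$ is principally polarisable. Therefore, $A_f$ defines an $F$-rational point on the Hilbert modular surface $Y_{-}(8)$.

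Next I would invoke Theorem~\ref{thm:2-tors-fld-sqrt1997} to conclude that there exists a polynomial $h$ in Table~\ref{table:2-tors-pols1997} whose splitting field is the normal closure $L$ of $F(A_f[2])$ over $\Q$. Choosing an irreducible factor $g \in F[x]$ of $h$, the field $F[x]/(g(x))$ is a subfield of $F(A_f[2])$ that captures, together with the $\Gal(F/\Q)$-action, the structure of the Galois module $A_f[2]$. Since $\CO\otimes\F_2 \simeq \F_2[e]$ with $e^2=0$ (as $2$ is ramified in $\Z[\sqrt{2}]$), the mod $2$ representation $\bar{\rho}_{A_f,2}$ lands in $\GL_2(\F_2[e])$, and in fact $\SL_2(\F_2[e])$ after fixing a symplectic basis. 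The factor $g$ then determines a representation $\bar{\rho}:\,\Gal(\Qbar/F)\to \SL_2(\F_2[e])$ such that $\bar{\rho} \simeq \bar{\rho}_{A_f,2}$ as Galois modules, by matching the Frobenius data in Table~\ref{table:frob-data1997} against the image data provided by $g$ (as computed in the proof of Theorem~\ref{thm:2-tors-fld-sqrt1997}).

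Finally, I would appeal to the moduli interpretation of the twisted Hilbert modular surface: $Y_{-}(8)^{\bar{\rho}}$ parametrises pairs $(A,\phi)$ with $A$ a principally polarised abelian surface over $F$ with RM by $\Z[\sqrt{2}]$ and $\phi:\,A[2] \xrightarrow{\sim} \bar{\rho}$ an isomorphism of Galois modules respecting the Weil pairing. The pair $(A_f, \phi)$, with $\phi$ the isomorphism $A_f[2] \simeq \bar{\rho}$ provided by the previous paragraph, defines an $F$-rational point on $Y_{-}(8)^{\bar{\rho}}$, which is the assertion of the theorem.

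The main obstacle is to choose $g$ so that the resulting representation $\bar{\rho}$ is genuinely isomorphic to $\bar{\rho}_{A_f,2}$ (not merely projectively equivalent or a twist), and to verify that $\bar{\rho}$ is symplectic and extends through the identification $\CO\otimes\F_2 \simeq \F_2[e]$ in the way required by the moduli problem. This is ensured by Theorem~\ref{thm:2-torsion-grps} together with the explicit classification in Table~\ref{table:2-tors-pols1997}, where the column recording $G = \Gal(F(A_f[2])/F)$ distinguishes, for the cases with $\Gal(L/\Q) \simeq S_4^2 \rtimes \Z/2\Z$, between the two possible extensions via a choice of irreducible factor $g$ of either $h_1$ or $h_2$.
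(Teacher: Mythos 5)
Your proposal is correct and fills in precisely the steps that the paper compresses into the one-line remark "This follows directly from Theorem~\ref{thm:2-tors-fld-sqrt1997}": the paper sets up the moduli interpretation of $Y_{-}(8)^{\bar\rho}$ just before the theorem statement, and the actual content — RM by $\Z[\sqrt{2}]$, principal polarisability via the norm $-1$ unit, Theorem~\ref{thm:2-tors-fld-sqrt1997} pinning down the possible $2$-torsion fields, and the resulting $F$-rational point on the twist — is exactly what you have written out. Same approach, just more explicit.
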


\begin{proof} This follows directly from Theorem~\ref{thm:2-tors-fld-sqrt1997}.
\end{proof}

\begin{rem}\rm As we explained earlier in Remark~\ref{rem:missing-exes}, we were unable to find the surface $A_f$ attached to the newform $f$ of level $(1)$ 
over $F = \Q(\sqrt{1997})$ in Table~\ref{table:frob-data1997}, using both the search methods described in Subsections~\ref{subsec:search353}, \ref{subsec:search421}
and~\ref{subsec:search1597}. However, as Theorem~\ref{thm:sqrt1997} indicates, $A_f$ must correspond to an $F$-rational 
point on a twist of $Y_{-}(8)^{\bar{\rho}}$, where $\bar{\rho}: \Gal(\Qbar/F) \to \SL_2(\F_2[e])$ is a mod $2$ representation arising from one of the polynomials in 
Table~\ref{table:2-tors-pols1997}. So, it is possible that a search on the twists $Y_{-}(8)^{\bar{\rho}}$ might be successful. The question of finding explicit 
equations for twists of Hilbert modular surfaces is one that is interesting in its own right. Indeed although such twists have been used in modularity lifting 
methods (see~\cite{ell05, sbt97} for example), they have never been approached algorithmically. So, we hope to return to studying them
in the future, and use them to find the surface $A_f$.
\end{rem}

\begin{rem}\rm Assume that there is an abelian surface $A_f$ over $F = \Q(\sqrt{1997})$ attached
to the Hilbert newform $f$ of level $(1)$ and weight $2$ listed in Table~\ref{table:frob-data1997}. 
Then, $A_f$ is isomorphic to the Jacobian $A'$ of some Richelot curve $C':\, y^2 = \tilde{h}(x)$. 
By~\cite[Lemma 4.1]{bd11}, there is a cubic extension $K'/F$ and a quadratic polynomial $Q(x) \in K'[x]$
such that $C'$ is of the form $$C':\,y^2 = \tilde{h}(x) = \mathrm{Norm}_{K'[x]/F[x]}(Q(x)).$$
By the uniqueness of the field $K$ in Lemma~\ref{lem:sqrt2-tors-fld-sqrt1997}, we see $K'/F$ is one
the cubic extension of $F$ contained in $K$. So, up to Galois conjugation, it is defined by a
cubic factor of the polynomial $h'(x) =x^6 - 25x^4 - 50x^3 - 343x^2 - 1372x - 1372$. Bending~\cite[Theorem 4.1]{ben99}
gives a parametrisation of abelian surfaces with RM by $\Z[\sqrt{2}]$ using the fact that they are Jacobians of Richelot
curves. However, his family does not seem to be very suitable for height search. 
\end{rem}

\begin{rem}\rm
There are six {\it pairwise} non-isogenous elliptic curves over $F$ with trivial conductor.  
Their $\Gal(F/\Q)$-conjugacy classes are represented by the curves:
\begin{align*}
&E_1: y^2 + w x y = x^3 + (w + 1)x^2 + (111w +  5401)x + (2406w + 81112);\\
&E_2: y^2 + (w + 1)xy + (w + 1)y = x^3 - wx^2 + (19636w + 434383)x\\
&\qquad\qquad\qquad\qquad\qquad\qquad\qquad\qquad\qquad\, + (5730650w + 125261893);\\
&E_3: y^2 + wxy + (w + 1)y = x^3 - x^2 + (9370w - 208733)x\\
&\qquad\qquad\qquad\qquad\qquad\qquad\qquad\qquad\qquad\,\,\,\, + (2697263w - 61535794). 
\end{align*}
The fields $F(E_1[2])$ and $F(E_2[2])$ have the same Galois closure, which is the field $K$ in Lemma~\ref{lem:sqrt2-tors-fld-sqrt1997}. 
Assuming that the surface $A_f$ in Theorem~\ref{thm:sqrt1997} exists, then $A_f[2]$ is an extension of $E_1[2]$ or $E_2[2]$ in
the category of finite flat $2$-group schemes defined over $\CO_F$ up to Galois conjugation. Theorem~\ref{thm:2-tors-fld-sqrt1997}
shows that there are least 10 possibilities for the generic fibre of $A_f[2]$. This makes it harder to pin down $A_f[2]$, and hence
the $2$-adic Tate module of $A_f$. In part, this explains the extra difficulties we experienced in finding $A_f$ using the same height
search as in Subsection~\ref{subsec:search353}.
\end{rem}

\providecommand{\bysame}{\leavevmode\hbox to3em{\hrulefill}\thinspace}
\providecommand{\MR}{\relax\ifhmode\unskip\space\fi MR }
% \MRhref is called by the amsart/book/proc definition of \MR.
\providecommand{\MRhref}[2]{%
  \href{http://www.ams.org/mathscinet-getitem?mr=#1}{#2}
}
\providecommand{\href}[2]{#2}

\end{document}